\newtheorem{thm}{Theorem}[section]
 \newtheorem{cor}[thm]{Corollary}
  \newtheorem{assumption}[thm]{Assumption}
 \newtheorem{lem}[thm]{Lemma}
 \newtheorem{prop}[thm]{Proposition}
 \theoremstyle{definition}
 \theoremstyle{remark}
 \newtheorem{rem}[thm]{Remark}
 \numberwithin{equation}{section}
\def\be#1 {\begin{equation} \label{#1}}
\newcommand{\ee}{\end{equation}}
\renewcommand{\phi}{\varphi}
\def\C{\mathbb C}
\def\R{\mathbb R}
\def\T{\mathbb T}
\def\Z{\mathbb Z}
\def\HH{\mathbb H}
\def\N{\mathbb N}
\def\E{\mathcal E}
\def\eps{\epsilon}
\def\dis{\displaystyle}    
\DeclarePairedDelimiter{\ceil}{\lceil}{\rceil}
\definecolor{gr}{rgb}   {0.,   0.69,   0.23 }
\definecolor{bl}{rgb}   {0.,   0.5,   1. }
\definecolor{mg}{rgb}   {0.85,  0.,    0.85}
\definecolor{yl}{rgb}   {0.8,  0.7,   0.}
\definecolor{or}{rgb}  {0.7,0.2,0.2}
\newcommand{\wt}{\widetilde} 
\renewcommand{\L}{\mathscr{L}}
\newcommand{\om}{  \omega   }
\newcommand{\ov}{  \overline  }
\newcommand\<{\langle}
\renewcommand\>{\rangle}
\begin{document}

\thanks{The author is supported by the grants   "BEKAM''  ANR-15-CE40-0001 and  "ISDEEC'' ANR-16-CE40-0013}

\author{ Laurent Thomann }
\address{Institut  \'Elie Cartan, Universit\'e de Lorraine, B.P. 70239,
F-54506 Vand\oe uvre-l\`es-Nancy Cedex, FR}
\email{laurent.thomann@univ-lorraine.fr}

\title{Growth of Sobolev norms for  linear Schr\"odinger operators}

\subjclass[2000]{35Q41; 35B08}    

\keywords{Linear Schr\"odinger equation,  time-dependent potential, growth of Sobolev norms, reducibility.}

\begin{abstract}
 We give an example of a  linear, time-dependent, Schr\"odinger operator with optimal growth of Sobolev norms. The construction is explicit, and relies on  a comprehensive study of the linear Lowest Landau Level equation with a time-dependent potential. 
 \end{abstract}

\maketitle

%

\section{Introduction and main result}

 The aim of this paper is to present an example of linear, time-dependent, Schr\"odinger operator which exhibits optimal polynomial growth of Sobolev norms.  Moreover, this operator takes the form $\wt{H}+\mathscr{L}(t)$, where $\wt{H}$ is an elliptic operator with compact resolvent and where the perturbation $\mathscr{L}(t)$ is a   small, time-dependent, bounded self-adjoint operator. Our construction is actually entirely explicit and it is based on the study of linear Lowest Landau Level equations (LLL) with a time-dependent potential. \medskip

In   Maspero-Robert~\cite{MasRo}, the authors study  linear Schr\"odinger operators, obtain global well-posedness results and prove very precise polynomial bounds on the possible growth of Sobolev norms under  general conditions (see Assumption~\ref{assumption1} below).  We show here that these bounds are optimal. \medskip

Our setting is the following: consider the 2-dimensional harmonic oscillator
$$
H =-(\partial^2_x+\partial^2_y)+(x^2+y^2)= -4\partial_z \partial_{\ov z}+|z|^2,
$$
where  $z=x+iy$,  $\partial_z=\frac12(\partial_x-i\partial_y)$. This operator acts on the space
$$
\wt{\mathcal{E}} =\big \{\, u(z) = e^{-\frac{|z|^2}{2}} f(z)\,,\;f \; \mbox{entire\ holomorphic}\,\big\}\cap \mathscr{S}'(\C),
$$
and if we define the  Bargmann-Fock space $\mathcal{E}$ by
$$
\mathcal{E} =\big \{\, u(z) = e^{-\frac{|z|^2}{2}} f(z)\,,\;f \; \mbox{entire\ holomorphic}\,\big\}\cap L^2(\C ),
$$
then the so-called    special Hermite functions   $(\phi_n)_{n \geq 0}$    given by
\begin{equation*} 
\varphi_n(z) = \frac{z^n}{\sqrt{\pi n!}}  e^{-\frac{|z|^2}{2}},
\end{equation*}
form    a Hilbertian basis of $\mathcal{E}$, and   are  eigenfunctions of $H$, namely 
$$H\phi_n=2(n+1)\phi_n, \quad n\geq 0.$$~

Let  $0\leq \tau <1$ and  set $\rho(\tau)=\frac1{2(1-\tau)} \in [1/2 ,\infty)$. We define the operator $\wt{H}_\tau=(H+1)^{\rho(\tau)}$, which in turn  defines the scale  of Hilbert spaces $\big(\widetilde{\HH}^{s}_\tau\big)_{s\geq 0}$ by 
\begin{equation*} 
\widetilde{\HH}_\tau^{s} = \big\{ u\in L^2(\C),\; \wt{H}_\tau^{s/2}u\in L^2(\C)\big\} \cap \E,\qquad \widetilde{\HH}_\tau^{0} =\E,
\end{equation*}
and we   denote by $L$  the Lebesgue measure on $\C$.\medskip

For $X$ a Banach space, we denote by  $\mathcal{C}_b\big(\R;X\big)$ the subspace of $\mathcal{C}\big(\R;X\big)$ composed of bounded functions:
$$\mathcal{C}_b\big(\R;X\big)= \big\{t\mapsto u(t) \in \mathcal{C}\big(\R;X\big) :\;  \sup_{t \in \R} \|u(t)\|_X <+\infty     \big\}.$$
Similarly, for all $k \in \N$ we define the space $\mathcal{C}^k_b\big(\R,X\big)$ by 
$$\mathcal{C}^k_b\big(\R;X\big)= \big\{ u \in \mathcal{C}_b\big(\R;X\big) :\;     \partial^j_t u \in \mathcal{C}_b\big(\R;X\big),\; \forall \,0 \leq j \leq k   \big\}.$$\medskip

Let $0 \leq\tau <1$ and $s\geq 0$. For a family $\big( \mathscr{L}(t)\big)_{t\in \R}$  of continuous linear mappings
 \begin{equation*}
 \mathscr{L}(t) :  \widetilde{\HH}^{s}_\tau\longrightarrow\widetilde{\HH}_\tau^{s},
  \end{equation*}
  we consider the following assumptions :
  \begin{assumption}\label{assumption1} $\big( \mathscr{L}(t)\big)_{t\in \R}$ is a family of linear operators which satisfies: 
\begin{enumerate}[$(i)$]
 \item  One has $t\longmapsto  \mathscr{L}(t) \in \mathcal{C}_b\big(\R; \mathcal{L}(\widetilde{\HH}_\tau^{s})\big)$ for all $s \geq 0$. 
  \item  For every $t\in \R$, $\mathscr{L}(t)$ is symmetric  w.r.t. the scalar product of $\widetilde{\HH}_\tau^{0}$, 
  $$\int_{\C} \ov{v} \mathscr{L}(t) u  \,dL=  \int_{\C}   u\ov{ \mathscr{L}(t) v }\,dL, \quad \forall{u,v \in \widetilde{\HH}_\tau^{0}}.$$
   \item   The  family $\big( \mathscr{L}(t)\big)_{t\in \R}$   is $\wt{H}_\tau^{\tau}$-bounded in the sense that 
 $t\longmapsto   [\mathscr{L}(t), \wt{H}_\tau] \wt{H}_\tau^{-\tau} \in \mathcal{C}_b\big(\R,\mathcal{L}(\widetilde{\HH}_\tau^{s})\big)$ for all $s \geq 0$.
  \item  For all $\ell  \in \N$, one has $t\longmapsto  \mathscr{L}(t) \in \mathcal{C}^{\ell}_b\big(\R; \mathcal{L}(\widetilde{\HH}_\tau^{s} ; \widetilde{\HH}_\tau^{s- \ell \tau})\big)$ for all $s \geq 0$. 
 \end{enumerate}
\end{assumption}

 Finally, for $s\geq 0$, we consider the initial value problem
  \begin{equation} \label{lin-abs}
\left\{
\begin{aligned}
&i\partial_t u=\big(\wt{H}_\tau+\mathscr{L}(t) \big)u , \quad   (t,z)\in \R\times \C,\\
&u(t, \cdot)_{|t=t_0}=  u_0 \in \widetilde{\HH}_\tau^{s},
\end{aligned}
\right.
\end{equation} 
and we are able to state our  main result :
 \begin{thm} \label{thm-abs}
 For any $0 \leq \tau<1$ and any $\eps>0$, there exists a family of linear operators $\big( \mathscr{L}(t)\big)_{t\in \R}$ which satisfies Assumption~\ref{assumption1}, so that for all $s\geq 0$ :
 \begin{enumerate}[$(i)$]
\item There exists $C_s>0$ such that 
$$\dis \sup_{t\in \R}\| \mathscr{L}(t)  \|_{ \mathcal{L}(\widetilde{\HH}_\tau^{s})} \leq C_s \eps.$$

\item The problem \eqref{lin-abs} is globally well-posed in $\widetilde{\HH}_\tau^{s}$ : for any $u_0\in \widetilde{\HH}_\tau^{s}$, there exists a unique solution $u(t):=\mathcal{U}(t,t_0)u_0$ such that $u\in \mathcal{C}\big(\R, \widetilde{\HH}_\tau^{s}\big)$ to \eqref{lin-abs}. Moreover, $\mathcal{U}$ has the group property
$$\mathcal{U}(t_2,t_1)\mathcal{U}(t_1,t_0)=\mathcal{U}(t_2,t_0), \quad \mathcal{U}(t,t)=I_d, \quad  \forall \, t,t_1, t_2 \in  \R,$$
and $\mathcal{U}$   is unitary in $\widetilde{\HH}_\tau^{0}$
$$ \big\| \mathcal{U}(t,t_0)u_0\big\|_{\widetilde{\HH}_\tau^{0}}=\| u_0\|_{\widetilde{\HH}_\tau^{0}},\quad  \forall \, t  \in  \R.$$
\item Any solution to \eqref{lin-abs}, with initial condition $u_0 \in \widetilde{\HH}_\tau^{s}$, satisfies for all $t\in \R$
 \begin{equation*} 
\big\| \mathcal{U}(t,t_0)u_0\big\|_{\widetilde{\HH}_\tau^{s}}\leq C \| u_0\|_{\widetilde{\HH}_\tau^{s}}\<\eps(t-t_0)\>^{\frac s{2(1-\tau)}}.
\end{equation*}
\item There exists a nontrivial initial condition $u_0 \in \cap_{k\geq 1} \widetilde{\HH}_\tau^{k} $ such that   the corresponding solution to \eqref{lin-abs} satisfies   for all $t\in \R$
 \begin{equation*} 
\big\| \mathcal{U}(t,t_0)u_0\big\|_{\widetilde{\HH}_\tau^{s}} \geq c  \| u_0\|_{\widetilde{\HH}_\tau^{s}}\<\eps(t-t_0)\>^{\frac s{2(1-\tau)}}.
\end{equation*}
 \end{enumerate}
\end{thm}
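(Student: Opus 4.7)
The plan is to work in the special Hermite basis $(\phi_n)_{n\geq 0}$, in which $\wt{H}_\tau\phi_n=\lambda_n\phi_n$ with $\lambda_n=(2n+3)^{\rho(\tau)}$, and to build $\mathscr{L}(t)$ by directly prescribing its matrix elements $\langle\phi_m,\mathscr{L}(t)\phi_n\rangle$. Writing $u(t)=\sum_n c_n(t)\phi_n$, one has $\|u\|_{\widetilde{\HH}_\tau^s}^2=\sum_n\lambda_n^s|c_n|^2$, so the target rate $\langle\eps t\rangle^{s/(2(1-\tau))}=\langle\eps t\rangle^{s\rho}$ amounts to concentrating the solution near an index $n(t)\asymp(\eps t)^2$. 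I will pass to the interaction picture $v(t)=e^{it\wt{H}_\tau}u(t)$, which satisfies $i\partial_t v=\mathscr{L}_I(t)v$ with $\mathscr{L}_I(t)=e^{it\wt{H}_\tau}\mathscr{L}(t)e^{-it\wt{H}_\tau}$, and choose the time-dependent phases of $\mathscr{L}(t)$ so as to cancel the spectral phases $e^{it(\lambda_m-\lambda_n)}$, making $\mathscr{L}_I$ essentially autonomous.

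\textbf{Critical choice of step size.} I take $\mathscr{L}(t)$ with matrix elements coupling $\phi_n$ and $\phi_{n+k_n}$ with amplitude of order $\eps$ and resonant phase $e^{-it(\lambda_{n+k_n}-\lambda_n)}$. Such an operator is automatically bounded on every $\widetilde{\HH}_\tau^s$ because $\lambda_{n+k_n}/\lambda_n = O(1)$. A resonant two-level coupling of strength $\eps$ transfers mass from $\phi_n$ to $\phi_{n+k_n}$ in time $\sim \eps^{-1}$, so the peak index of the propagating wavepacket obeys $\dot n\asymp \eps k_n$; the target rate $n(t)\asymp(\eps t)^2$ therefore forces $k_n\asymp\sqrt n$. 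Remarkably, $\sqrt n$ is also the largest step allowed by item~(iii) of Assumption~\ref{assumption1}: since $\lambda_{n+k_n}-\lambda_n\asymp k_n\,n^{\rho-1}$, the bound $\lambda_{n+k_n}-\lambda_n\lesssim\lambda_n^\tau\asymp n^{\rho\tau}$ gives $k_n\lesssim n^{1-\rho(1-\tau)}=n^{1/2}$, using $\rho(1-\tau)=\tfrac12$. Thus the critical scaling compatible with Assumption~\ref{assumption1} is exactly the one producing the Maspero--Robert maximal rate.

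\textbf{Cascade dynamics and verification.} For the initial datum $u_0=\phi_0$, after inserting $\mathcal{C}^\infty$ time cutoffs activating each pair $(\phi_{n_j},\phi_{n_{j+1}})$ only during a window of size $\sim\eps^{-1}$ around its instant of resonance (with $n_{j+1}=n_j+k_{n_j}$), the interaction-picture dynamics should decompose into a cascade of successive two-level Rabi flips. A direct computation along the ladder shows that at time $t$ the mass of $v(t)$ is concentrated near $n(t)\asymp(\eps t)^2$, whence
\[
\|u(t)\|_{\widetilde{\HH}_\tau^s}=\|v(t)\|_{\widetilde{\HH}_\tau^s}\gtrsim\lambda_{n(t)}^{s/2}\asymp(\eps t)^{s\rho}=\langle\eps t\rangle^{s/(2(1-\tau))},
\]
yielding item~(iv) of the theorem. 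Assumption~\ref{assumption1} is then verified by direct inspection of matrix elements: item~(i) from the uniform bound on the amplitudes, item~(ii) by construction, item~(iii) from the commutator estimate above, and item~(iv) from the smoothness of the cutoffs. The well-posedness of item~(ii) and the upper bound of item~(iii) of the theorem are then consequences of the Maspero--Robert analysis~\cite{MasRo}.

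\textbf{Main obstacle.} The construction lives exactly at the critical scaling $k_n\asymp\sqrt n$ where item~(iii) of Assumption~\ref{assumption1} is saturated, leaving no margin for slack. The main technical difficulty is to show that the cascade of Rabi flips genuinely keeps the solution localized near a single moving level $n(t)$ throughout the evolution, rather than dispersing its mass across the intermediate levels---this is a semiclassical localization question on the spectral ladder. The Lowest Landau Level identification $\phi_n\leftrightarrow \frac{z^n}{\sqrt{\pi n!}}e^{-|z|^2/2}$, together with the associated Toeplitz calculus on the Bargmann--Fock space $\mathcal{E}$, should be the organizing tool that makes the required localization and commutator estimates tractable.
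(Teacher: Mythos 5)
Your proposal follows a genuinely different route from the paper (which constructs $\mathscr{L}(t)u=e^{-it\wt{H}_\tau}\Pi\big(W_0(t)e^{it\wt{H}_\tau}u\big)$ with a traveling-wave potential $W_0=|R_{\alpha t}V|^2$, so that the conjugated problem becomes the linear LLL equation with an \emph{explicit} solution, and items (ii)--(iv) follow from Theorems~\ref{thm2.1}--\ref{thm2.3}), but as it stands it has a genuine gap at its core. The entire lower bound (iv) rests on the claim that the interaction-picture dynamics ``decompose into a cascade of successive two-level Rabi flips'' and that ``a direct computation along the ladder shows'' concentration near $n(t)\asymp(\eps t)^2$. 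This is asserted, not proved, and it is exactly the hard part: each flip is only approximately a two-level resonant transfer (the phases $e^{it(\lambda_{m}-\lambda_n)}$ are not exactly cancelled during a cutoff window of length $\sim\eps^{-1}$ since $\lambda_{n+k_n}-\lambda_n$ drifts, the cutoff transients and the couplings to neighbouring rungs produce $O(1)$-in-number error terms per step, and there are infinitely many steps), so one must show quantitatively that a fixed positive fraction of the mass actually reaches the moving level after arbitrarily many imperfect transfers, uniformly in $t$, and for both $t\to+\infty$ and $t\to-\infty$ as required by the statement. You acknowledge this yourself in the ``main obstacle'' paragraph, which means the proposal is a plausible programme rather than a proof; without an adiabatic/normal-form argument or an explicit solution, the claimed localization along the spectral ladder does not follow from what is written. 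Note also that your construction sits exactly at the critical scaling $k_n\asymp\sqrt n$, so any loss in these error estimates cannot be absorbed by slack in Assumption~\ref{assumption1}(iii)--(iv); the verification of item (iv) of the Assumption for every $\ell$ (loss exactly $\ell\tau$ at each time derivative, uniformly in $t$, including the derivatives falling on the cutoffs) is likewise only indicated by scaling heuristics.

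By contrast, the paper sidesteps all cascade estimates: the growth is carried by an explicit magnetic-translation solution $u(t)=e^{-i\lambda t}R_{\alpha t}U$ of the LLL equation with potential \eqref{def-W}, so the lower bound reduces to the elementary computation \eqref{m-m}--\eqref{bornemin} in paragraph~\ref{para3.3}, and Assumption~\ref{assumption1}(iii) is checked by Weyl--H\"ormander calculus using $2\rho-1-2\rho\tau=0$. If you wish to pursue your spectral-ladder route, the missing ingredient is precisely a rigorous propagation/localization lemma for the truncated resonant system (in the spirit of the constructions of Delort and Maspero for $\tau=0$), including control of the off-resonant and cutoff errors summed over the infinite cascade; until that is supplied, item (iv) of the theorem is not established.
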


 Actually, items $(ii)$ and $(iii)$ directly follow from~\cite[Theorem 1.5]{MasRo}. The novelty in our work is item    $(iv)$ which shows that the upper  bounds obtained in  \cite[Theorem 1.5]{MasRo} are optimal without further assumptions, even for small perturbations $\L(t)$, see item $(i)$. \medskip
  
 It seems that the  example of Theorem~\ref{thm-abs} is the first one which covers all the possible values of  $0\leq \tau <1$, and it is noticeable the result of Theorem~\ref{thm-abs} is obtained for any value of $0\leq \tau<1$, by essentially the same example,  written in different scales of Hilbert spaces.    An example of such growth was given in~\cite{Delort2} in the case $\tau=0$ (see also~\cite{Maspero} for an alternative proof), and in~\cite[Appendix A]{BGMR2} in the case $\tau=1/2$, but it seems that the other cases were left open. \medskip
 
 We stress that our example is an operator which  takes the form $\wt{H}_\tau+\mathscr{L}(t)$, where $\wt{H}_\tau$ is a (time independent) elliptic operator with compact resolvent and $\mathscr{L}(t)$ a bounded self-adjoint operator. Moreover, this perturbation is small and satisfies indeed for all $\ell \geq 0$
 $$\dis \sup_{t\in \R}\| \partial^{\ell}_t\mathscr{L}(t)  \|_{ \mathcal{L}(\widetilde{\HH}_\tau^{s}; \widetilde{\HH}_\tau^{s- \ell \tau})} \leq C_{s,\ell} \eps.$$~
 
 If one allows unbounded perturbations,  it is simpler to obtain growth of Sobolev norms, as it is shown by an elementary example given in Appendix~\ref{appendixA}. In this latter context, growth of Sobolev norms can occur even with time-independent operators.\medskip

   Observe that  the $\phi_n$ are the eigenfunctions of $\wt{H}_\tau$, namely 
$$\wt{H}_\tau\phi_n=2^{\rho(\tau)} (n+1)^{\rho(\tau) }\phi_n, \quad n\geq 0.$$
Hence in our example, we see an exact correspondence between the asymptotics of the eigenvalues of~$\wt{H}_\tau$ and the rate of growth for \eqref{lin-abs}. If $\rho>1$ (which corresponds to $\tau >1/2$), the operator~$\wt{H}_\tau$ satisfies a gap condition, but in our example, $\partial^{\ell}_t \mathscr{L}(t)$ is not regular enough (see item $(iv)$ in Assumption~\ref{assumption1}) to  meet the  hypotheses of~\cite[Theorems~1.8~and~1.9]{MasRo}, in which better upper bounds are obtained.
 \medskip

Let us recall the following characterization of the Sobolev spaces $\widetilde{\HH}_\tau^{s}$. By~\cite[Lemma~C.1]{GGT}, for any   $s \geq 0$,  there exist $c,C>0$ such that for all  $u\in  \wt{\HH}_\tau^s$ 
\begin{equation*} 
c\|\<z\>^{s\rho(\tau)} u\|_{L^2(\C)} \leq  \|u\|_{\wt{\HH}_\tau^s} \leq C\|\<z\>^{s\rho(\tau)}  u\|_{L^2(\C)}, \quad \<z\>=(1+|z|^2)^{1/2}.
\end{equation*}
As a consequence, in the Bargmann-Fock space, a growth of Sobolev norm corresponds to a transfer of energy in the physical space. In our example, the growth will be induced by a traveling wave. This is in contrast to the previous known examples \cite{Delort2, BGMR2, Maspero}, where the growth was inherited by a time-periodic phenomenon.  \medskip

We end this section by reviewing  some results on the growth of  linear Schr\"odinger equations on manifolds with time-dependent potentials
\begin{equation}\label{NLS} 
i\partial_t u+\Delta u  +V(t,x)u=0.
\end{equation}
In \cite{Bou99a} Bourgain  proves a polynomial bound of the Sobolev norm for \eqref{NLS}, when  $V(t,x)$ is a bounded (real analytic) potential. Moreover, when the potential is  quasi-periodic in time  he obtains  in \cite{Bou99b} a logarithmic bound (see also \cite{Delort, Wang2008, FangZhang, GPT},  for  more results on norm inflation phenomena  in various settings). Delort \cite{Delort2} constructs an example with  polynomial growth for the harmonic oscillator perturbed by a (time-periodic) pseudo-differential operator of order zero. In~\cite{BGMR2}, the authors give the example of a time-periodic order one perturbation of the harmonic oscillator which induces polynomial growth.  In~\cite{ANS}, the authors  prove exponential growth of the energy norm for a linear (and nonlinear) harmonic oscillator perturbed by the angular momentum operator (see~\cite[Theorem~4.5]{ANS}).  We refer to~\cite{Maspero, HausMaspero}  for more examples with  growth of norms and to~\cite{BGMR} for bounds on abstract linear Schr\"odinger equations. Let us mention the article~\cite{LZZ} in which the authors obtain very precise results on the dynamics of a family of perturbations of the harmonic oscillator. Finally, in the recent paper~\cite{Faou-Rapha}, Faou and Rapha\"el  give  examples of solutions to perturbed harmonic oscillators which grow like $(\log t)^\alpha$. Interestingly, although being different to ours, one of their approach relies on the  study of the so-called continuous resonant equation (CR) which contains the dynamics of the LLL equation. \medskip


\section{The linear LLL equation with time-dependent potential}\label{Sect2}

We now present our example more in details. Let $W\in L^{\infty}(\R\times \C,\R)$ be a real-valued time-dependent potential and consider the linear equation

 \begin{equation}\label{lin-LLL} 
\left\{
\begin{aligned}
&i\partial_t u- \delta H u=\Pi\big(W(t,z)u\big), \quad   (t,z)\in \R\times \C, \quad \delta \in \R,\\
&u(t, \cdot)_{|t=t_0}=  u_0 \in \E,
\end{aligned}
\right.
\end{equation}
where $\Pi$ is the orthogonal projector on the space $\E$ (the kernel of $\Pi$ is very explicit, see \eqref{ker-pi} below). The equation~\eqref{lin-LLL} is  the linearization of the Lowest  Landau Level equation
 \begin{equation}\label{eq-disp} 
i\partial_t u- \delta Hu=\Pi(|u|^2u), 
\end{equation}
which  is used in the modeling of fast rotating Bose-Einstein condensates. See e.g. the introduction of~\cite{GGT} for physical motivation, and we refer to \cite{ABN, Nier, GGT, BiBiCrEv2, BiBiEv, Schw-Tho} for the   study of \eqref{lin-LLL}. Equation~\eqref{lin-LLL} is  a natural mathematical toy model, for which we can try to exhibit some particular dynamics.  \medskip

The dispersion parameter $\delta \in \R$ does not play a role in the dynamics of equation~\eqref{eq-disp}. Actually,~$u$ solves~\eqref{eq-disp} if and only if $v=e^{i \delta t H}u$ solves~\eqref{eq-disp} with $\delta=0$. This comes from the crucial property 
\begin{equation*}
e^{-itH}\Pi\big(e^{itH} a\, \ov{e^{itH} b}\, e^{itH} c\big)= \Pi\big( a\, \ov{b}\, c\big), \quad  \forall \,a,b,c  \in \E,
\end{equation*}
see \cite[Lemma~2.4 and Corollary~2.5]{GHT1}. However, the  transformation $v=e^{i \delta t H}u$ does not preserve the left hand side of \eqref{lin-LLL}, that is why we must keep  the parameter $\delta \in \R$  in our study (nonetheless we will see that it does not affect the dynamics of equation~\eqref{lin-LLL}, excepted in the reducibility result stated in Appendix~\ref{appendixA} where we need $\delta \neq 0$). \medskip

In the sequel, by a time translation, we  restrict to the case $t_0=0$. \medskip

In this section,  we state global well-posedness results with optimal bounds on the growth of the Sobolev norms for~\eqref{lin-LLL}. We are also able to obtain reducibility results for~\eqref{lin-LLL}, when $W$ is a small quasi-periodic potential, but these results are direct applications of~\cite{Grebert-Thomann}, thus we have postponed the statements to the Appendix~\ref{appendixA}.

\subsection{Statement of the results} Our first result concerns the global well-posedness of such an equation under general conditions on $W$. For $s\geq 0$, we denote by 
\begin{equation}\label{defl2}
L^{2,s}=\big\{u\in L^2(\C), \;\<z\>^su \in L^2(\C)\big\}, \quad \<z\>=(1+|z|^2)^{1/2}
\end{equation}
 the weighted Lebesgue space and $L^{2,s}_{\E}=L^{2,s}  \cap \E$. Then our well-posedness result reads:
 \begin{thm}\label{thm2.1}
Let $\delta \in \R$ and $W\in L^{\infty}(\R\times \C,\R)$.  For all $u_0 \in \mathcal{E}$, there exists a unique solution $u\in \mathcal{C}  (\R ,  \mathcal{E})$ to equation \eqref{lin-LLL}. Moreover, for every  $t\in \R$, 
 $$\int_{\C}|u(t,z)|^2dL(z)=\int_{\C}|u_0(z)|^2dL(z).$$
Furthermore, if for some $s>0$, $u_0 \in L^{2,s}_\mathcal{E}$, then  $u(t) \in L^{2,s}_\mathcal{E}$ for every $t\in \R$.
\end{thm}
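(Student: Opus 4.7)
The plan is to rewrite \eqref{lin-LLL} as $i\partial_t u = \bigl(\delta H + V(t)\bigr)u$ with $V(t) := \Pi\circ M_{W(t,\cdot)}$ (where $M_W$ is multiplication by $W$), and then to apply the standard semigroup theory for linear non-autonomous Cauchy problems with a uniformly bounded perturbation. Two structural observations drive the proof: first, $V(t)$ is bounded on $\mathcal{E}$ uniformly in $t$, with $\|V(t)\|_{\mathcal{L}(\mathcal{E})}\leq \|W\|_{L^\infty_{t,z}}$, since $\|\Pi\|_{L^2\to L^2}\leq 1$ and $\|M_{W(t,\cdot)}\|_{L^2\to L^2}\leq\|W\|_\infty$; second, $e^{-i\delta tH}$ is a unitary group on $\mathcal{E}$ by the spectral theorem (and in fact on every $\widetilde{\mathbb{H}}^s_\tau$, the operator $H$ being diagonal in the special Hermite basis).

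\textbf{Existence, uniqueness, and $L^2$ conservation.} First I would reformulate the Cauchy problem via Duhamel as the fixed-point equation
$$u(t) = e^{-i\delta tH}u_0 - i\int_0^t e^{-i\delta(t-\sigma)H}V(\sigma)u(\sigma)\,d\sigma,$$
and run a Banach-Picard argument in $\mathcal{C}([-T,T],\mathcal{E})$: the right-hand side is a contraction as soon as $T\|W\|_\infty <1$, giving a unique local solution that extends to all of $\mathbb{R}$ by linearity. For the conservation law, I would compute (on smooth approximants, then extend by density)
$$\tfrac{d}{dt}\|u(t)\|_{L^2}^2 = 2\,\Im\,\bigl\langle(\delta H + V(t))u,u\bigr\rangle_{L^2};$$
the $H$-contribution vanishes by self-adjointness, and for $V(t)$ I would use $\Pi u = u$ (as $u\in\mathcal{E}$) together with reality of $W$ to obtain
$$\bigl\langle\Pi(Wu),u\bigr\rangle_{L^2} = \bigl\langle Wu,\Pi u\bigr\rangle_{L^2} = \int_{\mathbb C}W(t,z)|u(t,z)|^2\,dL(z)\in\mathbb{R},$$
so its imaginary part is zero.

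\textbf{Persistence of $L^{2,s}$ regularity.} The key step is that $V(t)$ also acts boundedly on $L^{2,s}$ uniformly in $t$: once this is in hand, running the same Duhamel argument in $\mathcal{C}([-T,T],L^{2,s}_\mathcal{E})$ and applying Gronwall yields $\|u(t)\|_{L^{2,s}} \leq C_s e^{C_s|t|}\|u_0\|_{L^{2,s}}$ and hence the claim. As $M_{W(t,\cdot)}$ visibly preserves $L^{2,s}$ with norm $\|W\|_\infty$, it suffices to bound $\Pi$ on $L^{2,s}$. For this I would combine the explicit Bergman-Fock kernel
$$K(z,\zeta) = \frac{1}{\pi}e^{z\bar\zeta - \frac{|z|^2 + |\zeta|^2}{2}}, \qquad |K(z,\zeta)| = \frac{1}{\pi}e^{-\frac{|z - \zeta|^2}{2}},$$
with Peetre's inequality $\langle z\rangle^s \leq 2^{s/2}\langle z-\zeta\rangle^s\langle\zeta\rangle^s$: the fast Gaussian decay of $K$ reduces matters to the elementary Schur bound $\sup_z \int_{\mathbb C}\langle z-\zeta\rangle^s|K(z,\zeta)|\,dL(\zeta) < \infty$ (and symmetrically). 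Finally, $e^{-i\delta tH}$ is an isometry of $L^{2,s}_\mathcal{E}$ since the $\varphi_n$ remain orthogonal in $L^{2,s}$ by polar symmetry and the flow only multiplies each coefficient by a unimodular phase.

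\textbf{Main obstacle.} There is no deep conceptual obstruction: the entire argument is the standard template for linear non-autonomous problems with bounded perturbation. The only point that genuinely requires a short idea is the $L^{2,s}$-boundedness of the projector $\Pi$, but this reduces to a one-line Schur test once the Gaussian form of the kernel is written down.
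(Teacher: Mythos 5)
Your proposal is correct and follows essentially the same route as the paper: a Duhamel fixed point in $\mathcal{C}([-T,T],\mathcal{E})$ with $T$ depending only on $\|W\|_{L^\infty}$, globalization by linearity, $L^2$ conservation from the reality of $W$ together with $\Pi u=u$, and persistence of $L^{2,s}_\mathcal{E}$ via the weighted bound $\|\langle z\rangle^s\Pi(Wv)\|_{L^2}\leq C\|W\|_{L^\infty}\|\langle z\rangle^s v\|_{L^2}$ and the fact that $e^{-i\delta tH}$ is an isometry of $L^{2,s}_\mathcal{E}$. The only (harmless) differences are that you prove the weighted continuity of $\Pi$ by a Schur test with the Gaussian kernel and Peetre's inequality where the paper cites \cite[Proposition 3.1]{GGT}, and you get the isometry of the free flow from orthogonality of the $\varphi_n$ in the weighted space, whereas the paper uses the explicit rotation formula $e^{i\tau H}u(z)=e^{2i\tau}u(e^{2i\tau}z)$ and a change of variables.
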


 A natural question is the control of higher order Sobolev norms of the solution for large times and  this will be achieved, under some additional conditions on $W$. The notation $W \in \mathcal{C}^{\infty}_{b_t}\big(\R\times  \R^2, \R\big)$ means that $W$ is continuous and bounded in $t$ and smooth in the variables~$(x,y)$. We stress that derivation in the time variable is not needed. For notational convenience,  we sometimes identify $(x,y) \in \R^2$ and $z=x+iy \in \C$. In particular, for a function of the variables $(x,y)$ we use the notations 
\begin{equation}\label{not-der}
\partial_z=\frac12(\partial_x-i\partial_y), \qquad \partial_{\ov{z}}=\frac12(\partial_x+i\partial_y).
\end{equation}

\begin{thm} \label{thm2.2}
Let $\delta \in \R$ and $s\geq 0$. Assume that  $W \in \mathcal{C}^{\infty}_{b_t}\big(\R\times  \R^2, \R\big)$ is such that  
 \begin{equation}\label{condi-W}
 \sup_{0 \leq k\leq \ceil[\big]s} \|\partial^k_z W(t,\cdot)\|_{L^{\infty}(\C)}\leq C_0, \quad t\in \R,
   \end{equation}
then any solution to \eqref{lin-LLL}, with initial condition $u_0 \in L^{2,s}_\mathcal{E}$, satisfies for all $t\in \R$
 \begin{equation} \label{condi-major}
\|\<z\>^s u(t)\|_{L^{2}(\C)} \leq C\|\<z\>^s u_0\|_{L^{2}(\C)}  \<C_0 t\>^{s},   
\end{equation}
where the constant $C>0$ only depends on $s\ge 0$.
\end{thm}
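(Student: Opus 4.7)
The plan is to differentiate $\|z^m u(t)\|_{L^2}^2$ in time for integer $m$, close the estimate by induction on $m$, and obtain the real $s\ge 0$ case via complex interpolation between the integer bounds at $m=\lfloor s\rfloor$ and $m+1=\lceil s\rceil$, using that the weighted spaces $L^{2,s}_\mathcal{E}$ are standard interpolation spaces between consecutive integer levels. Henceforth take $s=m\in\mathbb{N}$. Differentiating along \eqref{lin-LLL} gives
\begin{equation*}
\tfrac{d}{dt}\|z^m u(t)\|_{L^2}^2 \;=\; 2\,\Im\!\int_\C |z|^{2m}\bar u\bigl(\delta Hu + \Pi(Wu)\bigr)\,dL.
\end{equation*}
The $H$-contribution vanishes: using $\partial_{\bar z}u=-\tfrac z2 u$ (since $u\in\mathcal{E}$), write $Hu=2u+2z\partial_z u+|z|^2 u$; a single integration by parts in $\partial_z$ together with $\partial_z\bar u=-\tfrac{\bar z}{2}\bar u$ shows that $\int|z|^{2m}\bar u\,Hu\,dL\in\mathbb{R}$. (Conceptually, the diagonalisation $\int|z|^{2m}\phi_n\bar\phi_k\,dL\propto\delta_{nk}$ makes $\|z^m u\|_{L^2}$ invariant under the free LLL flow.) Taking adjoints in the remaining term reduces matters to
\begin{equation*}
\tfrac{d}{dt}\|z^m u\|_{L^2}^2 \;=\; 2\,\Im\!\int_\C Wu\,\overline{T_m u}\,dL,\qquad T_m u := \Pi(|z|^{2m}u).
\end{equation*}

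The crucial step is an integration-by-parts identity made possible by the holomorphicity of the Bargmann-Fock factor. Writing $u=fe^{-|z|^2/2}$ with $f$ entire, and noting that $\Pi(\bar z^m\,\cdot\,)$ acts on $\mathcal{E}$ as $\partial_z^m$ on the entire side, one finds $T_m u=[\partial_z^m(z^m f)]e^{-|z|^2/2}$. Integrating by parts $m$ times in $\bar z$, using $\partial_{\bar z}f=0$ and $\partial_{\bar z}^j e^{-|z|^2}=(-z)^j e^{-|z|^2}$ in the Leibniz expansion, yields
\begin{equation*}
\int_\C Wu\,\overline{T_m u}\,dL \;=\; \sum_{k=0}^m\binom{m}{k}(-1)^k\!\int_\C |z|^{2(m-k)}\bar z^k\,(\partial_{\bar z}^k W)\,|u|^2\,dL.
\end{equation*}
The $k=0$ term is manifestly real and drops from $\Im$. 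For $k\ge 1$, hypothesis \eqref{condi-W} combined with $|\partial_{\bar z}^k W|=|\partial_z^k W|$ (as $W$ is real) and Cauchy–Schwarz applied via $|z|^{2m-k}=|z|^m\cdot|z|^{m-k}$ give
\begin{equation*}
\Bigl|\tfrac{d}{dt}\|z^m u(t)\|_{L^2}\Bigr| \;\le\; C_0\sum_{k=1}^m\binom{m}{k}\|z^{m-k}u(t)\|_{L^2}.
\end{equation*}

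Induction on $m$ now closes the argument. The base $m=0$ is the $L^2$-conservation of Theorem~\ref{thm2.1}; assuming $\|z^j u(t)\|_{L^2}\le C_j\|\langle z\rangle^j u_0\|_{L^2}\langle C_0 t\rangle^j$ for all $j<m$, time-integration of the above inequality, using $\int_0^t\langle C_0 s\rangle^{m-k}\,ds\lesssim C_0^{-1}\langle C_0 t\rangle^{m-k+1}\le C_0^{-1}\langle C_0 t\rangle^m$ (for $k\ge 1$), propagates the bound to level $m$, and the equivalence $\|\langle z\rangle^m u\|_{L^2}^2\asymp\sum_{j=0}^m\|z^j u\|_{L^2}^2$ delivers \eqref{condi-major}. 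The main technical point to address is the $m$-fold integration by parts in the key identity: it is justified by a standard density argument approximating $u$ by finite combinations of Hermite functions, where $f$ is a polynomial and the Gaussian weight $e^{-|z|^2}$ provides ample decay for all boundary terms to vanish. What makes the approach elegant is that this holomorphic IBP produces exactly the weights $|z|^{2m-k}$ needed for Cauchy–Schwarz to close the induction using only $k$ (rather than $2m$) derivatives of $W$.
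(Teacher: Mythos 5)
Your proof is correct, and its core coincides with the paper's: the identity you obtain by the holomorphic integration by parts (after conjugating, it is exactly the differentiation formula \eqref{form-deri0}, which the paper imports from \cite[Lemma~2.1]{Schw-Tho}), the vanishing of the $\delta H$ contribution, and the reduction of non-integer $s$ to the integer case by interpolation of the linear solution map between the levels $\lfloor s\rfloor$ and $\lceil s\rceil$ (which is precisely why \eqref{condi-W} asks for $\lceil s\rceil$ derivatives) are all as in the paper. Where you genuinely diverge is in how the differential inequality is closed. The paper bounds the right-hand side of \eqref{form-deri0} by $CC_0\int_{\C}\langle z\rangle^{2k-1}|u|^2\,dL$, applies H\"older between the weights $\langle z\rangle^{2k}$ and $1$ together with mass conservation, and integrates the single nonlinear inequality $\frac{d}{dt}\|\langle z\rangle^{k}u\|_{L^2}^2\le CC_0\|\langle z\rangle^{k}u\|_{L^2}^{2-1/k}$, which yields $(\|\langle z\rangle^k u_0\|^{1/k}+CC_0|t|)^k$ in one stroke. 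You instead split $|z|^{2m-k}=|z|^m\cdot|z|^{m-k}$ by Cauchy--Schwarz and run a linear induction on the weight order, using $\int_0^{|t|}\langle C_0 s\rangle^{m-k}ds\le C_0^{-1}\langle C_0 t\rangle^{m}$ for $k\ge1$; this also gives $\langle C_0 t\rangle^{m}$ with constants depending only on $m$, and the bookkeeping of $C_0$ (one power in front cancelling the $C_0^{-1}$ from the time integral) is right, so the constant $C$ depends only on $s$ as required. The only technical point to phrase carefully is dividing by $\|z^m u\|_{L^2}$ when passing from the derivative of the squared norm to that of the norm (standard: work with $\sqrt{y+\epsilon}$ or a Dini derivative), and the justification of the $m$-fold integration by parts, which you address by density at the same level of rigor as the paper's appeal to \cite{Schw-Tho}. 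In short: same key identity and endgame, a slightly more hands-on induction in place of the paper's more compact nonlinear ODE argument; both deliver \eqref{condi-major}.
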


Condition~\eqref{condi-W} is rather natural in the space $\E$. For instance, it is satisfied by the following class of potentials: assume that $V(t, \cdot) \in \E$, uniformly in $t\in \R$, then $W=|V|^2$ satisfies \eqref{condi-W} for all $k \geq 0$, by Lemma~\ref{lem.deri} and~\eqref{hyper}. We stress that in~\eqref{condi-W} one needs the operator $\partial_z$ (not $\partial_x$ or $\partial_y$).

\medskip

This  bound is indeed optimal as shown by the next result:

 \begin{thm} \label{thm2.3}
Let $\delta \in \R$.  For all $\eps>0$, there exists $W_\eps \in \mathscr{S}(\R \times \R^2, \R)$ such that  for all $1 \leq p \leq \infty$, $\|W_\eps(t,\cdot)\|_{L^p(\C)}\leq \eps$, and such that for all $k,j\geq 0$ and  uniformly in time 
 \begin{equation}\label{condi-der}
  \|\partial^j_{\ov z }  \partial^k_z W_\eps(t,\cdot)\|_{L^{\infty}(\C)}\leq  \eps  C_{jk}, \quad t\in \R,
 \end{equation}
and there exists a nontrivial initial condition $ u_0  \in \bigcap_{k\geq 0} L^{2,k}_\mathcal{E}$  such that the corresponding solution to \eqref{lin-LLL} satisfies for all $s \geq 0$ and $t\in \R$
 \begin{equation}\label{condi-minor}
 \|\<z\>^s u(t)\|_{L^{2}(\C)} \geq c_s  \|\<z\>^s u_0\|_{L^{2}(\C)}\<\eps t\>^{s}.
  \end{equation}
 Moreover, we have the following equivalence, when $ t\longrightarrow \pm \infty$
\begin{equation}\label{eq-l2}
\|\<z\>^s u(t)\|_{L^{2}(\C)}   \sim c^s \eps^s |t|^s  \|u_0\|_{L^{2}(\C)}.
\end{equation}
\end{thm}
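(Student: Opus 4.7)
I aim to construct an exact ``traveling coherent state'' solution, following the heuristic stated in the introduction: in the Bargmann--Fock space, growth of weighted $L^2$ norms corresponds to the solution escaping to spatial infinity. For $a \in \C$ the normalized coherent state $\psi_a(z) = \pi^{-1/2}\exp(\bar a z - \tfrac12|a|^2 - \tfrac12|z|^2)$ lies in $\E$ and satisfies $|\psi_a(z)|^2 = \pi^{-1}e^{-|z-a|^2}$. Fix $v \in \C$ with $|v|$ proportional to $\eps$ and take the ansatz $u(t,z) = \psi_{vt}(z)$, so that $u_0 = \psi_0 \in \bigcap_k L^{2,k}_{\E}$. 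A direct Gaussian computation gives, for every $s \geq 0$,
\begin{equation*}
\|\<z\>^s u(t)\|_{L^2(\C)}^2 = \frac{1}{\pi}\int_{\C}(1+|z|^2)^s\,e^{-|z-vt|^2}\,dL(z) \sim |vt|^{2s} \qquad \text{as } |t|\to\infty,
\end{equation*}
which yields both the lower bound \eqref{condi-minor} and the asymptotic equivalence \eqref{eq-l2} once $|v| = c\,\eps$. Hence the whole point is to produce a real Schwartz potential $W_\eps$ making this ansatz an exact solution of \eqref{lin-LLL}. Reducing to the case $\delta = 0$ by conjugating with $e^{-i\delta tH}$ (which maps coherent states to coherent states), I may assume $\delta = 0$.

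\textbf{The key identity.} I would seek $W_\eps(t,z) = \chi(z-vt)$ for a profile $\chi:\C\to\R$ to be determined, so that the potential tracks the state. Using the reproducing kernel $K(z,w) = \pi^{-1}e^{\bar w z - \frac12|z|^2 - \frac12|w|^2}$ of $\Pi$ and the change of variables $w \mapsto w + vt$, one derives the identity
\begin{equation*}
\Pi\bigl(W_\eps\,\psi_{vt}\bigr)(z) = \frac{1}{\pi}\,\psi_{vt}(z)\,F(z-vt), \qquad F(\zeta) = \int_{\C}\chi(w)\,e^{\bar w\zeta - |w|^2}\,dL(w),
\end{equation*}
in which $F$ is entire in $\zeta$. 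On the other hand $i\partial_t\psi_{vt}(z) = \psi_{vt}(z)(i\bar v z - it|v|^2)$, and after substituting $z = \zeta + vt$ the $\pm it|v|^2$ terms cancel, so the ansatz satisfies \eqref{lin-LLL} exactly when $F(\zeta) = i\pi\bar v\,\zeta$. Expanding $F(\zeta) = \sum_{k\ge 0}(c_k/k!)\zeta^k$ with $c_k = \int \chi(w)\,\bar w^k\,e^{-|w|^2}\,dL(w)$, the requirements are $c_0 = 0$, $c_1 = i\pi\bar v$, and $c_k = 0$ for every $k \geq 2$.

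\textbf{Explicit profile and verification.} I would take $\chi(w) = -2\gamma^2\,\Im(\bar v\,w)\,e^{-|w|^2/\sigma^2}$ with $\sigma > 0$ free and $\gamma = 1 + \sigma^{-2}$. This is manifestly real and Schwartz, and computing $c_k$ in polar coordinates $w = re^{i\theta}$ the angular integrations immediately give $c_k = 0$ for all $k \geq 2$ by orthogonality of the exponentials $e^{i\ell\theta}$, while $c_0 = 0$ and $c_1 = i\pi\bar v$ follow from a routine radial computation. With $|v|$ proportional to $\eps$ and a suitable choice of the global prefactor, the control $\|W_\eps(t,\cdot)\|_{L^p(\C)} \leq \eps$ for all $p \in [1,\infty]$ follows from the interpolation $\|\cdot\|_{L^p} \leq \|\cdot\|_{L^1}^{1/p}\|\cdot\|_{L^\infty}^{1-1/p}$, and the derivative bound \eqref{condi-der} is immediate because $\partial_z^k\partial_{\bar z}^j\chi$ is a polynomial in $(w,\bar w)$ times a Gaussian, with global prefactor $O(|v|) = O(\eps)$. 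Theorem~\ref{thm2.1} then identifies $u(t) = \psi_{vt}$ as the unique solution of the Cauchy problem issued from $\psi_0$, and the Gaussian computation of the first paragraph yields \eqref{condi-minor} and \eqref{eq-l2}. The main technical point is the kernel identity for $\Pi(W_\eps\psi_{vt})$ together with the vanishing-moment conditions for $\chi$; once this algebraic structure is in place the construction is exact and the remaining estimates are routine.
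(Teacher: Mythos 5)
Your construction is correct, and I checked the key computations: with $\psi_a(z)=\pi^{-1/2}e^{\bar a z-\frac12|a|^2-\frac12|z|^2}$ the kernel identity $\Pi(\chi(\cdot-vt)\psi_{vt})(z)=\frac1\pi\psi_{vt}(z)F(z-vt)$ holds after the shift $w\mapsto w+vt$, the equation for the ansatz does reduce to $F(\zeta)=i\pi\bar v\,\zeta$ (the $\pm i|v|^2t$ terms cancel as you say), and your profile $\chi(w)=-2(1+\sigma^{-2})^2\,\Im(\bar v w)\,e^{-|w|^2/\sigma^2}$ has exactly the moments $c_0=0$, $c_1=i\pi\bar v$, $c_k=0$ for $k\ge2$ (the radial integral gives $\pi\bar v/\gamma^2$, which the prefactor compensates). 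This is, however, a genuinely different route from the paper. The paper does not reverse-engineer a potential for a single coherent state: it quotes the explicit solutions of the coupled system \eqref{sys-conj} from Schwinte--Thomann, takes $u=e^{-i\lambda t}e^{-i\delta tH}R_{\alpha t}U$ with $U$ a combination of $\phi_0,\phi_1$, and sets $W=|v|^2$ where $v$ is the second component, so that $W$ is automatically of the form $|V|^2$ with $V\in\E$ magnetically translated; the lower bound is then proved by a triangle-inequality argument on $\|\<z-\alpha t\>^sU\|_{L^2}$ split into the regimes $|\eps t|$ large/small, and the bounds \eqref{condi-der} come from Lemma~\ref{lem.deri}. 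Your approach is self-contained (no appeal to \cite{Schw-Tho}) and arguably more transparent, at the price of verifying the $L^p$ and derivative bounds by hand (easy here, since $\chi$ is a polynomial times a Gaussian with prefactor $O(|v|)=O(\eps)$); the paper's approach keeps the potential in the structurally natural LLL class $|V|^2$, $V\in\E$, which it reuses in Section~\ref{Sect4}. Two small points you should make explicit: (a) the reduction to $\delta=0$ conjugates the potential by the time-dependent rotation $z\mapsto e^{2i\delta t}z$, so the actual potential for $\delta\neq0$ is $W_\eps(t,z)=\chi(e^{-2i\delta t}z-vt)$ (harmless, since all the $L^p$ norms, the sup norms of $\partial_z^k\partial_{\bar z}^j$, and $\|\<z\>^su(t)\|_{L^2}$ via \eqref{conti-flot} are rotation invariant — this is exactly the structure of \eqref{def-W}); (b) for \eqref{condi-minor} at all times, not just asymptotically, add the one-line remark that $\|\<z\>^su(t)\|_{L^2}\ge\|u(t)\|_{L^2}=\|u_0\|_{L^2}$ by mass conservation, which covers the regime $|\eps t|=O(1)$, as in the paper's two-bullet argument.
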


The inequality \eqref{condi-minor} is stated like that in order to give a counterpart to \eqref{condi-major}. Actually, as shown in \eqref{eq-l2}, the coefficient of the leading order of the lower term depends on  $\|u_0\|_{L^{2}(\C)}$ (and not on $ \|\<z\>^s u_0\|_{L^{2}(\C)}$). The inequality  \eqref{condi-minor} holds true since $u_0  \in \bigcap_{k\geq 0} L^{2,k}_\mathcal{E}$ is here fixed, and the constant $c_s>0$ depends on $u_0$. \medskip

  Observe that using the notations \eqref{not-der}, the condition \eqref{condi-der} can be rephrased as 
  \begin{equation*} 
  \|\partial^j_{x }  \partial^k_y W_\eps(t,\cdot)\|_{L^{\infty}(\C)}\leq  \eps  C_{jk}, \quad t\in \R.
 \end{equation*}

The result of Theorem \ref{thm2.3} is a direct consequence of  \cite[Theorem~1.5 and Corollary~1.6]{Schw-Tho} and we can make the explicit choices 
$$ \dis u_0=\sqrt{\eps} \big(\frac12\phi_0+i\frac{\sqrt{3}}2\phi_1\big), \quad \alpha=\frac{\sqrt{3}}{32 \pi}\eps,$$
 and
 \begin{equation}\label{def-W}
   W_\eps(t,z)=\frac{\eps}{4\pi}\big|  1-i\sqrt{3}(e^{-2i \delta t}z+\alpha t) \big|^2e^{-|e^{-2i \delta t}z+\alpha t|^2}.
   \end{equation}
 Actually, in   \cite[Theorem~1.5 and Corollary~1.6]{Schw-Tho} (see also \cite[equation (2.4)]{Schw-Tho}), unbounded trajectories where constructed for the system   
 \begin{equation}\label{sys-conj}
\left\{
\begin{aligned}
&i\partial_{t}{u}-\delta H{u} =    \Pi (|{v}|^2 {u}), \quad   (t,z)\in \R\times \C,\\
&i\partial_{t}{v}-\delta H{v}=  - \Pi (|{u}|^2 {v}),\\
&{u}(0,\cdot)=  u_0,\; {v}(0,\cdot)=  v_0,
\end{aligned}
\right.
\end{equation}
and the idea is here to consider the second equation in \eqref{sys-conj} as given, and to interpret the term~$|v|^2$ in the first line as a given time-dependent potential. \medskip

  Notice that the growth of Sobolev norms is not obtained by a periodic potential as in \cite{Delort,BGMR2}. Here, as it is shown in \eqref{def-W}, the growth is exhibited by a time translation (more precisely, by a magnetic translation in the Bargmann-Fock space). \medskip
  
  In general, growth of Sobolev norms is a phenomenon which happens due to resonances of the equation. Recall that  the dynamics of the cubic LLL equation $i \partial_t u = \Pi(|u|^2u)$ is included in the so-called cubic resonant (CR) equation, which was derived in~\cite{FGH} as a resonant approximation of NLS (we also refer to~\cite{GHT1} for a comprehensive study of the (CR) equation). \medskip

 In the last result of this section we show that if $W$ has additional spacial decay, then the possible growth of the solution of \eqref{lin-LLL} enjoys better controls  :

\begin{thm} \label{thm2.4}
Let $\delta \in \R$ and $s\geq 0$ and let $W \in \mathcal{C}^{\infty}_{b_t}\big(\R\times  \R^2, \R\big)$.
\begin{enumerate}[$(i)$]
\item  Assume that    
 \begin{equation*} 
  \sup_{1 \leq j \leq  k\leq \ceil[\big]s}     \|z^{2k-j}\partial^j_z W(t,\cdot)\|_{L^{\infty}(\C)}\leq C_{1}, \quad t\in \R,
   \end{equation*}
then any solution to \eqref{lin-LLL}, with initial condition $u_0 \in L^{2,s}_\mathcal{E}$, satisfies for all $t\in \R$
 \begin{equation*} 
\|\<z\>^s u(t)\|_{L^{2}(\C)} \leq C\|\<z\>^s u_0\|_{L^{2}(\C)}  \< C_1 t\>^{1/2},   
\end{equation*}
where the constant $C>0$ only depends on $s\ge 0$.
\item  Let $\eps>0$. Assume that    for all $k \geq 1 $ and  uniformly in time   
 \begin{equation}\label{condi-W2}
     \|\<z\>^{2k}\partial^k_z W(t,\cdot)\|_{L^{\infty}(\C)}\leq C_{k}, \quad t\in \R,
   \end{equation}
then any solution to \eqref{lin-LLL}, with initial condition $u_0 \in L^{2,s}_\mathcal{E}$, satisfies for all $t\in \R$
 \begin{equation*} 
\|\<z\>^s u(t)\|_{L^{2}(\C)} \leq C\|\<z\>^s u_0\|_{L^{2}(\C)}  \<  t\>^{\eps},
\end{equation*}
where  the constant $C>0$ depends on $W$, $s\geq 0$ and $\eps>0$.
\end{enumerate}
\end{thm}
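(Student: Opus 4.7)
The strategy is to analyze the integer moments $F_{k}(t):=\|z^{k}u(t)\|_{L^{2}(\C)}^{2}$ for $k\leq\lceil s\rceil$ and recover the weighted norm by summing/interpolating in $k$. On the Bargmann--Fock space, the reproducing identity $\Pi(|z|^{2k}v)=p_{k}(N)v$ for $v\in\E$, where $p_{k}(X)=(X+1)\cdots(X+k)$ and $N$ is the number operator satisfying $N\varphi_{n}=n\varphi_{n}$, gives $F_{k}(t)=\langle p_{k}(N)u,u\rangle$. Since $[H,p_{k}(N)]=0$ on $\E$, the $\delta H$ part of \eqref{lin-LLL} contributes nothing when one differentiates $F_{k}$, yielding
\begin{equation*}
\dot F_{k}(t)\;=\;\frac{1}{i}\bigl\langle[p_{k}(N),\Pi(W\cdot)]\,u(t),u(t)\bigr\rangle.
\end{equation*}

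The commutator is unfolded using the identification of $N$ on $\E$ with the angular-momentum operator $R:=z\partial_{z}-\bar z\partial_{\bar z}$, which preserves $\E$ (so $[R,\Pi]=0$) and acts as a derivation on products. The basic identity $[N,\Pi(W\cdot)]u=\Pi((RW)u)$ for $u\in\E$, iterated by induction on the order of $p_{k}$, yields the finite Taylor-like expansion
\begin{equation*}
\bigl[p_{k}(N),\Pi(W\cdot)\bigr]\;=\;\sum_{j=1}^{k}\frac{1}{j!}\,\Pi(R^{j}W\,\cdot)\,p_{k}^{(j)}(N),
\end{equation*}
so that $\dot F_{k}$ reduces to a finite sum of integrals $(j!)^{-1}\int_{\C}(R^{j}W)\,p_{k}^{(j)}(N)u\,\bar u\,dL$, each estimated by Cauchy--Schwarz as $C\|u\|_{L^{2}}\,\|w_{j}\,p_{k}^{(j)}(N)u\|_{L^{2}}$ with $w_{j}$ a weight capturing the decay of $R^{j}W$.

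For Part (i), the hypothesis $\|z^{2k-j}\partial_{z}^{j}W\|_{\infty}\leq C_{1}$ (combined with integration by parts on $\E$ via the constraint $\partial_{\bar z}u=-\tfrac{z}{2}u$ to reduce the mixed-derivative terms in $R^{j}W$ to pure $\partial_{z}$ ones) yields the effective weight $w_{j}=\langle z\rangle^{-2(k-j)}$. The crucial weighted estimate
\begin{equation*}
\bigl\|\langle z\rangle^{-2(k-j)}p_{k}^{(j)}(N)u\bigr\|_{L^{2}}^{2}\;\sim\;\sum_{n}\frac{p_{k}^{(j)}(n)^{2}}{n^{2(k-j)}}|c_{n}|^{2}\;\sim\;\sum_{n}|c_{n}|^{2}\;=\;\|u\|_{L^{2}}^{2}
\end{equation*}
(which follows from the Bargmann--Fock asymptotic $\int_{\C}|\varphi_{n}|^{2}\langle z\rangle^{-2\ell}\,dL\sim n^{-\ell}$ as $n\to\infty$) reflects the precise cancellation between the $n^{k-j}$ growth of $p_{k}^{(j)}(N)$ and the decay of the weight. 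Summing over $j$ gives $|\dot F_{k}|\leq C\|u_{0}\|_{L^{2}}^{2}$, hence $F_{k}(t)\leq F_{k}(0)+Ct\|u_{0}\|^{2}$; taking square roots and summing over $k\leq\lceil s\rceil$ yields Part (i).

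For Part (ii), the improved hypothesis provides $|R^{j}W|\lesssim\langle z\rangle^{-j}$ for \emph{every} $j\geq 1$, so that the decay of $R^{j}W$ keeps improving with $j$ rather than capping at $2(k-j)$. Terms in the commutator expansion with $j\geq 2k/3$ are already handled by the Part~(i) argument and yield the constant bound; for the problematic small-$j$ terms, the plan is a modified-energy/Birkhoff-type construction. One replaces $F_{k}$ by $\widetilde F_{k}=F_{k}+\sum_{\ell\leq M}\alpha_{\ell}\bigl\langle \Pi(R^{\ell}W\cdot)q_{\ell}(N)u,u\bigr\rangle$, with correctors $\alpha_{\ell},q_{\ell}$ chosen so that the leading commutator contributions cancel telescopically and only a remainder involving $R^{M+1}W$ survives. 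Using the enhanced decay of $R^{M+1}W$, one obtains a differential inequality $|\dot{\widetilde F}_{k}|\leq C_{M}\,\widetilde F_{k}^{\alpha_{M}}\|u_{0}\|^{2(1-\alpha_{M})}$ with $\alpha_{M}\to 0$ as $M\to\infty$; integrating gives $\widetilde F_{k}(t)\lesssim\langle t\rangle^{1/(1-\alpha_{M})}$ and hence $\|z^{k}u(t)\|_{L^{2}}\lesssim\langle t\rangle^{\epsilon}$ after choosing $M=M(\epsilon)$ large. The main obstacle is this iterative construction: verifying the equivalence $\widetilde F_{k}\sim F_{k}$ uniformly in $t$ (so that bounds on $\widetilde F_{k}$ transfer to $F_{k}$), and carefully bookkeeping the decay through the iteration so that only the prescribed regularity $\|\langle z\rangle^{s}u_{0}\|_{L^{2}}$ of the initial datum enters the final constant.
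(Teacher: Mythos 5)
You have the right mechanism for part (i), and it is essentially the paper's: differentiate the moment $F_k(t)=\|z^k u(t)\|_{L^2(\C)}^2$, use the Fock-space structure so that only weighted holomorphic derivatives of $W$ can contribute, deduce $|\dot F_k|\leq C C_1\|u_0\|_{L^2}^2$, and integrate. Your algebraic preliminaries are correct: $\Pi(|z|^{2k}v)=p_k(N)v$, the expansion $[p_k(N),\Pi(W\cdot)]=\sum_{j=1}^k\frac1{j!}\Pi(R^jW\,\cdot)p_k^{(j)}(N)$, and the radial-weight diagonalization $\|\<z\>^{-2\ell}v\|_{L^2}^2=\sum_n|c_n|^2\int_\C\<z\>^{-4\ell}|\varphi_n|^2\,dL$ are all valid, and the pure-$\partial_z$ terms of $R^jW$ are indeed dominated by the hypothesis with the weight $\<z\>^{-2(k-j)}$. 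However, the step you dispose of in a parenthesis --- eliminating the mixed derivatives $\partial_z^a\partial_{\ov z}^bW$ (which the hypothesis does not control at all) by integration by parts using $\partial_{\ov z}u=-\tfrac z2u$ --- is exactly the nontrivial content of the identity \eqref{form-deri0}, which the paper imports from [Schw-Tho, Lemma 2.1]; in that identity only $z^k\ov z^{\,k-j}\partial_z^jW$ appears and the hypothesis applies verbatim. You assert this reduction but do not perform it. Also, ``taking square roots and summing over $k\leq\ceil{s}$'' does not give the stated estimate for non-integer $s$: it puts $\|\<z\>^{\ceil{s}}u_0\|_{L^2}$ on the right-hand side, which may be infinite for $u_0\in L^{2,s}_\E$; as in the paper, one must treat the integer case $k=\ceil{s}$ and then interpolate with the $L^2$ conservation.

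Part (ii) is where there is a genuine gap. The engine of your modified-energy scheme is the claim that \eqref{condi-W2} gives $|R^jW|\lesssim\<z\>^{-j}$ with decay improving in $j$, and this is false. For instance $R^2W=z\partial_zW+z^2\partial_z^2W-2z\ov z\,\partial_z\partial_{\ov z}W+\ov z\,\partial_{\ov z}W+\ov z^{\,2}\partial_{\ov z}^2W$: the mixed term $z\ov z\,\partial_z\partial_{\ov z}W$ is not controlled by \eqref{condi-W2} at all, and the low-order terms $z\partial_zW$, $\ov z\,\partial_{\ov z}W$ decay only like $\<z\>^{-1}$ no matter how many times $R$ is applied. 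Hence the remainder $R^{M+1}W$ in your corrected energy gains nothing as $M\to\infty$, and the differential inequality with exponent $\alpha_M\to0$ has no basis; in addition, the Birkhoff-type construction is only announced (no choice of correctors, no verification of the telescoping cancellation, no proof of $\widetilde F_k\sim F_k$, which would anyway require a smallness of $W$ that is not assumed), so this part is a program rather than a proof. The paper's argument is entirely different and much simpler: fix an integer $k$ with $s/k<\eps$, apply the part-(i) bound at level $k$ (with constant now depending on $W$), and interpolate between $\|\<z\>^ku(t)\|_{L^2}\leq C\<t\>^{1/2}\|\<z\>^ku_0\|_{L^2}$ and the $L^2$ conservation with $\theta=s/k$, so that the fixed $\<t\>^{1/2}$ loss at level $k$ becomes $\<t\>^{s/(2k)}\leq\<t\>^{\eps}$ at level $s$. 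No iterative correction of the energy is needed.
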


 This result is the analogous to \cite{Bou99a, Delort} in which similar bounds are obtained for the linear Schr\"odinger equation with time-dependent potential, but in our case the proof is much simpler. \medskip
 
 The result of Theorem~\ref{thm2.4} shows that growth of Sobolev norms can occur only if $W$ is concentrated in the region $|z| \gg 1$ when $t  \longrightarrow \pm \infty$. This is typically the case with the example of the traveling wave exhibited in Theorem~\ref{thm2.3}  (see~\eqref{def-W}). \medskip

Under additional conditions on $W$ (analyticity in time and quasi-periodicity) one can  show the solutions are indeed bounded, see  Theorem~\ref{theo-lin-LLL}.

\subsection{Plan of the paper}  The rest of the paper is organized as follows. We end this section by giving some notations. In Section~\ref{Sect3} we study the linear LLL equation \eqref{lin-LLL}. We are then able to apply these results to prove Theorem~\ref{thm-abs} in Section~\ref{Sect4}. In Appendix~\ref{appendixA} we give an another example of Schr\"odinger operator with unbounded orbits and  in Appendix~\ref{appendixB} we state a reducibility result for~\eqref{lin-LLL}.

\subsection{Some recalls and notations}

 The harmonic oscillator $H$ is defined by
$$
H = -4\partial_z \partial_{\ov z}+|z|^2,
$$
with the classical notations $z=x+iy$ and 
$$\partial_z=\frac12(\partial_x-i\partial_y), \qquad \partial_{\ov{z}}=\frac12(\partial_x+i\partial_y).$$ 
Recall that the family of the special Hermite functions $(\phi_n)_{n \geq 0}$ is given by 
$$
\varphi_n(z) = \frac{z^n}{\sqrt{\pi n!}}  e^{-\frac{|z|^2}{2}}.
$$
The family $(\phi_n)_{n \geq 0}$ forms  a Hilbertian basis of $\mathcal{E}$ (see \cite[Proposition 2.1]{Zhu}),  and the $\phi_n$ are the eigenfunctions of $H$, 
$$H\phi_n=2(n+1)\phi_n, \quad n\geq 0.$$
 We can show (see \cite{GGT}) that $\Pi$, the orthogonal projection on $\E$, is given by the formula
\begin{equation}\label{ker-pi}
(\Pi u)(z) = \frac{1}{\pi} e^{-\frac{|z|^2}{2}} \int_\mathbb{C} e^{\ov  w z - \frac{|w|^2}{2}} u(w) \,dL(w),
\end{equation}
where $L$ stands for Lebesgue measure on $\C$. 

 Recall (see \eqref{defl2}) that for $s\geq 0$, the weighted Lebesgue    space $L^{2,s}$ is defined by
$$L^{2,s}=\big\{u\in L^2(\C), \;\<z\>^su \in L^2(\C)\big\}, \quad \<z\>=(1+|z|^2)^{1/2}$$
   and $L^{2,s}_{\E}=L^{2,s}  \cap \E$.  For $s \geq 0$, we define the harmonic Sobolev spaces     by 
\begin{equation*} 
\HH^{s} = \big\{ u\in L^2(\C),\; {H}^{s/2}u\in L^2(\C)\big\}\cap \E,
\end{equation*}
equipped with the natural norm $ \|u\|_{\HH^s} =\|H^{s/2} u\|_{L^2(\C)}$. Then by~\cite[Lemma~C.1]{GGT}, we have $\HH^{s}  = L^{2,s}_{\E}$  with the equivalence of norms
\begin{equation} \label{eqiv-0}
c\|\<z\>^s u\|_{L^2(\C)} \leq  \|u\|_{\HH^s} \leq C\|\<z\>^s u\|_{L^2(\C)}, \quad \forall\,u \in L^{2,s}_{\E}.
\end{equation}
  \medskip
 
Recall the hypercontractivity estimates (see~\cite{Carlen} or \cite[Lemma~A.2]{Schw-Tho} for the bounds without the optimal constants which will be enough for our purpose) : for all $1 \leq p \leq q \leq +\infty$ and $u \in \wt{\E}$
\begin{equation}  \label{hyper}
\left( \frac{q}{2\pi} \right)^{1/q} \| u \|_{L^q(\C)} \leq \left( \frac{p}{2\pi} \right)^{1/p} \| u \|_{L^p(\C)}.
\end{equation} ~

 In this paper $c,C>0$ denote universal constants the value of which may change from line to line.


 \section{Study of the linear LLL equation}\label{Sect3}

    \subsection{Global existence}
      To solve  equation \eqref{lin-LLL} we find a fixed point in a ball of $\E$ to 
$$F: u \longmapsto e^{-i \delta t H}u_0-i \int_0^te^{-i\delta (t-s) H} \big(\Pi(Wu)(s)\big)ds.$$
Let us sketch the proof: since $e^{i\tau  H}$ is unitary in $L^2$, we have
\begin{eqnarray*}
\|F(u)(t)\|_{L^2} &\leq& \|u_0\|_{L^2}+ \int_{0}^t \|\Pi(Wu)(s)\|_{L^2} ds \\
&\leq& \|u_0\|_{L^2}+ Ct  \sup_{s \in [0,t]} \|u(s)\|_{L^2}      \|W\|_{L^\infty},
\end{eqnarray*}
 where we used the continuity of $\Pi$ in $L^2$ in the last line (for continuity results for $\Pi$ we refer to~\cite[Proposition 3.1]{GGT}). Contraction estimates are obtained similarly, and this gives a local in time solution. Globalization can be obtained by the Gr\"onwall inequality since the equation is linear.   The $L^2$ norm of a solution is a conserved quantity, since the potential $W$ is real valued.\medskip

If moreover   $u_0 \in L^{2,s}_\mathcal{E}$, we can  prove the    wellposedness in $L^{2,s}_\mathcal{E}$, thanks to the following lemma, which we quote for future reference:

\begin{lem}
Let $W \in L^{\infty}(\C)$ and $v \in  L^{2,s}_\mathcal{E}$, then 
\begin{equation}\label{conti-zs}
\| \langle z \rangle^s \Pi \big( Wv\big) \|_{L^2(\C)}  \leq C \| W\|_{L^\infty(\C)} \|\langle z \rangle^s v\|_{L^2(\C)},
\end{equation}
and  
\begin{equation}\label{conti-flot}
\| \langle z \rangle^s e^{i\tau  H} v  \|_{L^2}  = \| \langle z \rangle^s  v  \|_{L^2}.
\end{equation}
\end{lem}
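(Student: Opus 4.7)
\emph{Proof plan.} I would address the two identities separately, in reverse order.

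For \eqref{conti-flot}, the plan is to exploit the fact that $e^{i\tau H}$ acts on $\E$ as a rotation. Since $H\phi_n=2(n+1)\phi_n$ and $\phi_n(z)=z^n e^{-|z|^2/2}/\sqrt{\pi n!}$, decomposing $v=\sum_n c_n\phi_n \in \E$ gives
\[
(e^{i\tau H}v)(z) \;=\; e^{2i\tau}\sum_n c_n \frac{(e^{2i\tau}z)^n}{\sqrt{\pi n!}}\,e^{-|z|^2/2} \;=\; e^{2i\tau}v(e^{2i\tau}z),
\]
where the first equality uses $|e^{2i\tau}z|=|z|$ to rewrite the Gaussian. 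In particular $|(e^{i\tau H}v)(z)|=|v(e^{2i\tau}z)|$, and since $\langle z\rangle=\langle e^{2i\tau}z\rangle$ and Lebesgue measure is invariant under the rotation $w=e^{2i\tau}z$, the change of variable yields \eqref{conti-flot}.

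For \eqref{conti-zs}, the plan is to use the explicit kernel \eqref{ker-pi}. Completing the square in $\bar w z-\tfrac12|z|^2-\tfrac12|w|^2$ shows that the reproducing kernel satisfies
\[
|K(z,w)| \;=\; \frac{1}{\pi}\,e^{-|z-w|^2/2},
\]
so that, bounding $|W|$ by $\|W\|_{L^\infty}$ under the integral,
\[
|\Pi(Wv)(z)| \;\leq\; \frac{\|W\|_{L^\infty}}{\pi}\int_{\C} e^{-|z-w|^2/2}\,|v(w)|\,dL(w).
\]
I would then apply Peetre's inequality $\langle z\rangle^s\leq 2^{s/2}\langle z-w\rangle^s\langle w\rangle^s$ (valid for $s\geq 0$ since $|z|^2\leq 2|z-w|^2+2|w|^2$ implies $\langle z\rangle^2\leq 2\langle z-w\rangle^2\langle w\rangle^2$) to obtain
\[
\langle z\rangle^s|\Pi(Wv)(z)| \;\leq\; C_s\,\|W\|_{L^\infty}\int_{\C}\big(\langle z-w\rangle^s e^{-|z-w|^2/2}\big)\big(\langle w\rangle^s|v(w)|\big)\,dL(w).
\]
This is a convolution with the radial $L^1(\C)$ function $g(z)=\langle z\rangle^s e^{-|z|^2/2}$, so Young's inequality $\|g\ast f\|_{L^2}\leq \|g\|_{L^1}\|f\|_{L^2}$ applied with $f(z)=\langle z\rangle^s |v(z)|$ delivers \eqref{conti-zs}.

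Neither step presents a serious obstacle. The identity \eqref{conti-flot} reduces to the magnetic-rotation representation of $e^{i\tau H}$ on $\E$, and \eqref{conti-zs} is a weighted Young inequality after Peetre's splitting. The only point worth being cautious about is that Peetre's inequality is applied in its simple form for the \emph{non-negative} exponent $s$, which matches the hypothesis $s\geq 0$ in the statement.
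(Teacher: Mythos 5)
Your proposal is correct, and for \eqref{conti-flot} it coincides with the paper's argument: both identify $e^{i\tau H}$ on $\E$ as the rotation $(e^{i\tau H}v)(z)=e^{2i\tau}v(e^{2i\tau}z)$ by checking on the basis $(\phi_n)_{n\geq 0}$, and conclude by rotation invariance of $\langle z\rangle$ and of Lebesgue measure. For \eqref{conti-zs} you diverge from the paper: there the bound is simply quoted from Proposition 3.1 of G\'erard--Germain--Thomann \cite{GGT}, whereas you give a self-contained proof from the explicit kernel \eqref{ker-pi}, using that $|K(z,w)|=\pi^{-1}e^{-|z-w|^2/2}$ (completion of the square), Peetre's inequality $\langle z\rangle^s\leq 2^{s/2}\langle z-w\rangle^s\langle w\rangle^s$, and Young's inequality with the integrable radial weight $\langle\cdot\rangle^s e^{-|\cdot|^2/2}$; all three steps are valid as stated, and the kernel bound makes the constant explicit in terms of $\|\langle\cdot\rangle^s e^{-|\cdot|^2/2}\|_{L^1(\C)}$. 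What your route buys is independence from the external reference and an argument that works for any $s\geq 0$ with a transparent constant; what the paper's citation buys is brevity, the cited proposition in \cite{GGT} being essentially this same weighted Schur/Young estimate for the Bargmann projector. No gaps.
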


\begin{proof}
The bound \eqref{conti-zs} is a consequence of~\cite[Proposition 3.1]{GGT}. For~\eqref{conti-flot}, we first observe that for all $u \in \E=L^{2,0}_\mathcal{E}$, we have $e^{i \tau H}u(z)=e^{2i\tau }u(e^{2i\tau}z)$, as can be seen  by testing on the complete  family $(\phi_n)_{n\geq 0}$. Then~\eqref{conti-flot} follows from the change of variables $z \mapsto e^{-2i\tau }z$. 
\end{proof}

\subsection{Bounds on Sobolev norms: proof of Theorem~\ref{thm2.2}}  
  
Now that equation \eqref{lin-LLL} is well-posed, let us inspect the behaviour of the norms of the solutions. For this we need a result, which is an consequence of~\cite[Lemma 2.1]{Schw-Tho} :

\begin{lem} 
Let $k \in \N$ and let $W \in \mathcal{C}^k(\R \times \R^2, \R)$ be a real valued function. Assume that $u\in L^{2,k}_\mathcal{E}$ satisfies
$$i \partial_t u- \delta Hu=\Pi \big(W u\big).$$
Then 
\begin{equation}\label{form-deri0}
\frac{d}{dt}\int_{\C}|z|^{2k}|u(t,z)|^2dL(z)= -2\sum_{j=1}^k(-1)^j{{k}\choose{j}}\mathfrak{Im} \int_{\C}    z^k    \ov{z}^{k-j}  |u(t,z)|^2 \big( \partial_z^j     W(t,z)\big)dL(z).
\end{equation}
\end{lem}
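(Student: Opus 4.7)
I would begin by differentiating $\int_{\C}|z|^{2k}|u|^2\,dL$ in time, using $i\partial_t u = \delta Hu + \Pi(Wu)$ together with $\partial_t|u|^2 = 2\,\mathfrak{Im}(\ov u\cdot i\partial_t u)$ and the reality of $W$, to obtain
\[
\frac{d}{dt}\int_{\C}|z|^{2k}|u|^2\,dL = 2\delta\,\mathfrak{Im}\int_{\C}|z|^{2k}\ov u\, Hu\,dL + 2\,\mathfrak{Im}\int_{\C}|z|^{2k}\ov u\,\Pi(Wu)\,dL.
\]
The plan is then to show that the first summand vanishes and to rewrite the second as the right-hand side of \eqref{form-deri0}.

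For the $H$-term, the quickest argument is geometric: the free flow acts on $\E$ as the rotation $(e^{-i\delta tH}u)(z) = e^{-2i\delta t}u(e^{-2i\delta t}z)$, and because $|z|^{2k}$ is rotation-invariant, $\int_{\C}|z|^{2k}|e^{-i\delta tH}u|^2\,dL$ is independent of $t$; differentiating at $t=0$ forces $\mathfrak{Im}\int|z|^{2k}\ov u\, Hu\,dL = 0$. (Equivalently, from $Hu = 2u + 2z\partial_z u + |z|^2 u$ on $\E$ together with $\partial_z\ov u = -\tfrac{\ov z}{2}\ov u$, one integration by parts in $\partial_z$ shows directly that $\int|z|^{2k}\ov u\, Hu\,dL\in\R$.)

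For the $\Pi(Wu)$-term I would work in the Bargmann--Fock picture: writing $u = e^{-|z|^2/2}F$ with $F$ entire, I would rely on the two elementary identities $\ov z^k e^{-|z|^2} = (-1)^k\partial_z^k e^{-|z|^2}$ and $\ov{F'}\,e^{-|z|^2} = \partial_{\ov z}(\ov F e^{-|z|^2}) + z\,\ov F e^{-|z|^2}$. Combined with the reproducing-kernel representation $\Pi(Wu)(z) = \tfrac{1}{\pi}e^{-|z|^2/2}\int e^{\ov w z - |w|^2/2}W(w)u(w)\,dL(w)$ and $k$ successive integrations by parts in $\partial_z$ (which produce no boundary terms thanks to the Gaussian), the first identity transfers derivatives onto $z^k$ and onto the $W$-factor, yielding via Leibniz a sum indexed by $j$ with binomial coefficients. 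The second identity (iterated) then converts the residual holomorphic--antiholomorphic pairings $\ov{F^{(m)}}\,e^{-|z|^2}$ back into $|u|^2$ times derivatives of $W$, modulo real corrections arising from the fact that $\partial_{\ov z}\partial_z W = \tfrac{1}{4}\Delta W$ is real. Collecting and taking $\mathfrak{Im}$ kills the real corrections and the $j=0$ contribution $\int|z|^{2k}W|u|^2\,dL$, leaving precisely the sum over $j\ge 1$ with coefficients $(-1)^j\binom{k}{j}$ in \eqref{form-deri0} (I checked the case $k=1$ by hand to confirm the signs match).

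The main obstacle is combinatorial bookkeeping: the two nested integration-by-parts procedures must interlock so that the a priori double sum collapses to a single binomial $\binom{k}{j}$ with the correct sign $(-1)^j$. The analytic side is routine --- the Gaussian weights make all boundary terms vanish, and the assumption $u\in L^{2,k}_{\E}$ ensures absolute convergence throughout.
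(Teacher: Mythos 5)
Your proposal follows the same skeleton as the paper's proof: differentiate $\int_{\C}|z|^{2k}|u|^2\,dL$, split into the $\delta H$-term and the $\Pi(Wu)$-term, show the first vanishes, and identify the second with the right-hand side. Your treatment of the $\delta H$-term is correct and in fact a little slicker than the paper's: the paper writes $u=fe^{-|z|^2/2}$ and integrates by parts, whereas your observation that $e^{-i\delta tH}$ acts as a rotation, so that the radial weight $|z|^{2k}$ makes $\| |z|^k e^{-i\delta tH}u\|_{L^2}$ constant in $t$, gives $\mathfrak{Im}\int|z|^{2k}\ov{u}Hu\,dL=0$ immediately (your parenthetical direct computation also works). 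The real divergence is in the second step: the paper does not prove the identity for the $\Pi(Wu)$-term at all, it quotes it from \cite[Lemma~2.1]{Schw-Tho}, while you set out to re-derive it in the Bargmann picture. Your toolkit is the right one: self-adjointness of $\Pi$, the reproducing kernel, $\ov z^{\,k}e^{-|z|^2}=(-1)^k\partial_z^k e^{-|z|^2}$ (equivalently $\Pi(\ov z^{\,k}v)=e^{-|z|^2/2}\partial_z^k g$ for $v=ge^{-|z|^2/2}$), and the identity $\ov{F'}e^{-|z|^2}=\partial_{\ov z}(\ov F e^{-|z|^2})+z\ov Fe^{-|z|^2}$; together with reality of $W$ these do produce the stated formula, as your $k=1$ check confirms.

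The weak point is that, as written, the heart of the lemma is asserted rather than proved: the claim that the double sum coming from Leibniz on $\partial_z^k(z^kF)$ and from the iterated conversion of $\ov{F^{(n)}}e^{-|z|^2}$ back into $|u|^2$-terms collapses, after taking $\mathfrak{Im}$, to the single alternating sum $\sum_{j\ge 1}(-1)^j\binom{k}{j}\int z^k\ov z^{\,k-j}|u|^2\,\partial_z^jW\,dL$ is exactly the nontrivial content of \cite[Lemma~2.1]{Schw-Tho}, and verifying it only for $k=1$ does not establish the general case. To close this you should either carry out the bookkeeping explicitly (for instance organize the computation as $\mathfrak{Im}\int WF\,\ov{\partial_z^k(z^kF)}\,e^{-|z|^2}dL$, move the operator $(z-\partial_{\ov z})^n$ across by parts onto $W\ov z^{\,n}$, and track the cancellations under $\mathfrak{Im}$ using $\ov{\partial_z^jW}=\partial_{\ov z}^jW$, e.g. by induction on $k$), or simply cite \cite[Lemma~2.1]{Schw-Tho} as the paper does. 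With that step supplied, your argument is a complete and self-contained alternative to the paper's proof; without it, it is a correct strategy with the key combinatorial identity left unproven.
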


\begin{proof}
We compute
\begin{eqnarray*}
\frac{d}{dt}\int_{\C}|z|^{2k}|u|^2dL&=& 2\mathfrak{Re} \int_{\C} |z|^{2k} \ov{u}\partial_t u dL\\
&=& 2\mathfrak{Im} \int_{\C} |z|^{2k} \ov{u}\Pi(Wu) dL+2\delta \mathfrak{Im} \int_{\C} |z|^{2k} \ov{u}Hu dL.
\end{eqnarray*}
Let us first show that  $\dis \mathfrak{Im} \int_{\C} |z|^{2k} \ov{u}Hu dL=0$. Since $H = -4\partial_z \partial_{\ov z}+|z|^2$, it remains to show that $\dis \mathfrak{Im} \int_{\C} |z|^{2k} \ov{u} \partial_z \partial_{\ov z}u dL=0$. Write $u(z)=f(z)e^{-\frac12|z|^2}$, then 
\begin{eqnarray*}
 \mathfrak{Im} \int_{\C} |z|^{2k} \ov{u} \partial_z \partial_{\ov z}u dL&=&  \mathfrak{Im} \int_{\C} |z|^{2k} \ov{f}e^{-\frac12|z|^2} \partial_z \partial_{\ov z}\big(fe^{-\frac12|z|^2} )dL\\
&=& -\frac12 \mathfrak{Im} \int_{\C} |z|^{2k} \ov{f} \big(f+z\partial_zf-\frac12 |z|^2f )e^{-|z|^2}dL\\
&=& -\frac12 \mathfrak{Im} \int_{\C} z^{k+1}\ov{z}^k \ov{f} (\partial_zf) e^{-|z|^2}dL\\
&=&0,
\end{eqnarray*}
by integrating by parts, hence the result. To complete the proof, we apply \cite[Lemma 2.1]{Schw-Tho}.
\end{proof}

We are now able to prove Theorem~\ref{thm2.2}. By linearity, it is enough to consider the case ${\|\<z\>^ku_0\|_{L^2(\C)}=1}$. We use the identity \eqref{form-deri0}. Then, since  $ \|\partial^j_z W\|_{L^{\infty}(\C)}\leq C_0$ for all $1\leq j \leq k$, we deduce by H\"older 
\begin{eqnarray*}
\frac{d}{dt}\int_{\C}|z|^{2k}|u|^2dL&\leq & C C_0\int_{\C}  \<z\>^{2k-1} |u|^2dL \\
&\leq & CC_0\ \Big(\int_{\C}  \<z\>^{2k} |u|^2dL\Big)^{1-\frac{1}{2k}} \Big(\int_{\C}   |u|^2dL\Big)^{\frac{1}{2k}},
\end{eqnarray*}
therefore, using the conservation of the mass,  
\begin{equation*} 
\frac{d}{dt} \big\| \<z\>^ku\big\|^2_{L^2(\C)} \leq CC_0 \big\| \<z\>^ku\big\|^{2-\frac1{k}}_{L^2(\C)},
\end{equation*}
which in turn implies, by time integration,  
\begin{equation*}
\|\<z\>^ku(t)\|_{L^2(\C)}\leq \big(  \|\<z\>^ku_0\|^{1/k}_{L^2(\C)}+ CC_0 |t|\big)^k \leq C( 1  + C_0 |t|)^k,
\end{equation*}
 hence the result when~$k$ is an integer. The general case follows by interpolation.

\subsection{Bounds on Sobolev norms: proof of Theorem~\ref{thm2.4}}  The proof is similar, excepted that now we  have better controls on $W$. 

$(i)$ Again, by interpolation, it is enough to consider the case $s=k$ is an integer. By  \eqref{form-deri0} we have
\begin{equation*}
\frac{d}{dt}\int_{\C}|z|^{2k}
|u|^2dL\leq  C C_1\int_{\C}    |u|^2dL ,
\end{equation*}
which implies the result by time  integration :  
\begin{equation*} 
 \big\| \<z\>^ku\big\|^2_{L^2(\C)} \leq  \big\| \<z\>^ku_0\big\|^2_{L^2(\C)} + CC_1|t| \|  u_0\|^2_{L^2(\C)}.
\end{equation*}

$(ii)$ We assume that the stronger condition \eqref{condi-W2} holds. We use here interpolation theory for linear operators. Fix $s>0$, $\eps >0$ and set $k \in \N$ such that $s/k<\eps$. Then from $(i)$ we have 
$$\|\<z\>^ku(t)\|_{L^2(\C)} \leq    C  \<t\>^{1/2} \|\<z\>^ku_0\|_{L^2(\C)}. $$
Next, the $L^2-$conservation yields $\|u(t)\|_{L^2(\C)} \leq       \|u_0\|_{L^2(\C)} $. Then by interpolation, we get that for all $0 \leq \theta \leq 1$
$$\|\<z\>^{\theta k}u(t)\|_{L^2(\C)} \leq    C^{\theta}  \<t\>^{\theta/2} \|\<z\>^{\theta k}u_0\|_{L^2(\C)}. $$
The result then follows by taking $\theta=s/k$.

 \subsection{Growth of Sobolev norms: proof of Theorem~\ref{thm2.3}}\label{para3.3}
 Let us define the   magnetic translations by the formula
\begin{equation*} 
R_{\alpha} :(u,v)(z)  \mapsto \big(u(z+\alpha) e^{\frac{1}{2}(\overline z \alpha - z \overline{\alpha})}, v(z+\alpha) e^{\frac{1}{2}(\overline z \alpha - z \overline{\alpha})} \big), \qquad  \alpha \in \mathbb{C},
\end{equation*}
as well as  the space rotations 
\begin{equation*} 
L_{\theta} :(u,v)(z)  \mapsto \big(u(e^{i\theta}z), v(e^{i\theta}z) \big), \qquad \theta \in \mathbb{T}.
\end{equation*}
As it can be checked on the $(\phi_n)_{n\geq 0}$, we have $e^{itH}= e^{2it}L_{2t}$ for all $t\in \R$. Now  we refer to   \cite[Section~1.7.2]{Schw-Tho}. The system
 \begin{equation*} 
\left\{
\begin{aligned}
&i\partial_{t}{u}-\delta H{u} =    \Pi (|{v}|^2 {u}), \quad   (t,z)\in \R\times \C,\\
&i\partial_{t}{v}-\delta H{v}=  - \Pi (|{u}|^2 {v}),\\
&{u}(0,z)=  u_0(z),\; {v}(0,z)=  v_0(z),
\end{aligned}
\right.
\end{equation*}
admits the following explicit solutions:
 \begin{eqnarray*} 
  (u,v)&=&\big(e^{-i\lambda t}      e^{-i \delta t H} R_{\alpha t} U , e^{-i\mu t} e^{-i\delta t H} R_{\alpha t} V  \big)\\
  &=&\big(e^{-i(\lambda +2\delta)t}      L_{-2\delta t}R_{\alpha t} U , e^{-i(\mu +2\delta)t}      L_{-2\delta t} R_{\alpha t} V  \big),
 \end{eqnarray*}
 with 
  \begin{equation*} 
  \dis U=\sqrt{\eps}\big(\frac12\phi_0+i\frac{\sqrt{3}}2\phi_1\big), \quad V=\sqrt{\eps}\big(\frac12\phi_0-i\frac{\sqrt{3}}2\phi_1\big)   ,
 \end{equation*}
and 
$$ \lambda = \frac{7\eps}{32\pi}, \quad \mu = -\frac{7\eps}{32\pi}, \quad   \alpha=\frac{\sqrt{3}}{32 \pi}\eps.$$
It remains to check that $W:=|v|^2$ and $u$ satisfy the assumptions and the conclusions of Theorem~\ref{thm2.3}.

 On the one hand, for all $t\in \R$, $\|u(t)\|_{L^2}=\sqrt{\eps}$, and for $s>0$, 
  \begin{equation}\label{nor-0}
 \|\<z\>^s u_0\|_{L^2}=   \|\<z\>^s U\|_{L^2} \leq c_s \sqrt{\eps}. 
     \end{equation}
Let us prove that there exists $c_s>0$ such that for all $t \in \R$
 \begin{equation}\label{lls}
 \|\<z\>^s u(t)\|_{L^2}  \geq c_s \sqrt{\eps} \<\eps t\>^s.
    \end{equation}
We have   
 $$\|\<z\>^s u(t)\|^2_{L^2}= \|\<z\>^s  R_{\alpha t}U\|^2_{L^2} = \|\<z-\alpha t\>^s U\|^2_{L^2}= \int_{\C} \big(1+|z-\alpha t|^2\big)^s |U(z)|^2dL(z)    .$$
Therefore,  
  \begin{equation}\label{m-m}
\|\<z\>^s u(t)\|^2_{L^2} \geq \frac12  \int_{\C} \big(1+|z-\alpha t|^{2s}\big) |U(z)|^2dL(z).
   \end{equation} 
 $\bullet$ By the triangle inequality we have
   \begin{equation*}
|\alpha t|^{2s} \leq  (|z- \alpha t|+|z|)^{2s} \leq 4^s(|z- \alpha t|^{2s}+ |z|^{2s}),
   \end{equation*} 
 which in turn implies
    \begin{equation*}
|z- \alpha t|^{2s} \geq 4^{-s}|\alpha t|^{2s} -|z|^{2s}.
   \end{equation*} 
 As a consequence, by \eqref{nor-0} and \eqref{m-m}
   \begin{eqnarray}\label{bornemin}
\|\<z\>^s u(t)\|^2_{L^2} &\geq& c  |\alpha t|^{2s} \int_{\C}   |U(z)|^2dL(z)- C  \int_{\C} |z|^{2s}  |U(z)|^2dL(z) \nonumber \\
&\geq& (c  |\alpha t|^{2s} - C ) \eps \nonumber \\
&\geq& (c  |\eps t|^{2s} - C ) \eps,
   \end{eqnarray} 
 where the constants $c,C>0$ have varied from line to line. From \eqref{bornemin} we deduce that there exists $C_0>0$ such that if $|\eps t|>C_0$ we have 
     \begin{equation*}
\|\<z\>^s u(t)\|_{L^2} \geq c\<\eps t\>^s \sqrt{\eps} .
   \end{equation*} 
 $\bullet$   In the regime $|\eps t|\leq C_0$, we use the inequality \eqref{m-m} to write
        \begin{equation*}
\|\<z\>^s u(t)\|_{L^2} \geq c   \| U\|_{L^2}=c \sqrt{\eps}    \geq c\<\eps t\>^s \sqrt{\eps}   .
   \end{equation*} 
 As a consequence, we have proven \eqref{lls}. \medskip
   
 On the other hand, we have the explicit expression  
 \begin{equation*}
   W(t,z)=\frac{\eps}{4\pi}\big|  1-i\sqrt{3}(e^{-2i\delta t}z+\alpha t) \big|^2e^{-|e^{-2i\delta t}z+\alpha t|^2}= \big|L_{-2\delta t}R_{\alpha t}V(z)\big|^2.
   \end{equation*}
Therefore we have 
$$\|W\|_{L^1}= \| L_{-2\delta t}R_{\alpha t}V\|^2_{L^2}= \|  V\|^2_{L^2} =\eps$$
 and from~\eqref{hyper} we have
 $$\|W\|_{L^\infty}= \| L_{-2\delta t}R_{\alpha t}V\|^2_{L^\infty}= \|  V\|^2_{L^\infty} \leq \eps.$$
  Moreover, from Lemma~\ref{lem.deri}, we deduce
  \begin{equation*} 
  \|\partial^j_{\ov z }  \partial^k_z W \|_{L^{\infty}(\C)}=     \|\partial^j_{\ov z }  \partial^k_z \big(|L_{-2\delta t}R_{\alpha t}V|^2\big) \|_{L^{\infty}(\C)} \leq   C_{jk}   \| L_{-2\delta t}R_{\alpha t}V\|^2_{L^\infty}  \leq \eps   C_{jk}  , 
 \end{equation*}
 which was the claim.
 \section{Proof of Theorem~\ref{thm-abs}} \label{Sect4}

 \subsection{Some notations}
 
  For $0\leq \tau <1$ we set $\rho(\tau)=\frac1{2(1-\tau)}>0$ and we define the operator $\wt{H}_\tau=(H+1)^{\rho(\tau)}$, where    $H$ is the harmonic oscillator  defined by
$$
H = -4\partial_z \partial_{\ov z}+|z|^2.
$$
We then  define the family of Hilbert spaces $\big(\widetilde{\HH}_\tau^{s}\big)_{s\geq 0}$ by 
\begin{equation*}  
 \widetilde{\HH}_\tau^{s} = \big\{ u\in L^2(\C),\; \wt{H}^{s/2}u\in L^2(\C)\big\} \cap \E,\quad \widetilde{\HH}_\tau^{0} =\E.
\end{equation*}
Recall that, by \cite[Lemma~C.1]{GGT}, we have
\begin{equation*} 
c\|\<z\>^{s\rho(\tau)} u\|_{L^2(\C)} \leq  \|u\|_{\wt{\HH}_\tau^s} \leq C\|\<z\>^{s\rho(\tau)}  u\|_{L^2(\C)}, \quad \<z\>=(1+|z|^2)^{1/2}.
\end{equation*}
Observe also that 
\begin{equation}  \label{eq-hs}
\widetilde{\HH}_\tau^{s} = {\HH}^{s \rho(\tau)}
\end{equation}
where ${\HH}^{\sigma}$ stands for the  harmonic Sobolev space based on the  harmonic oscillator $H$, and we have 
\begin{equation} \label{eq-hs1}
c\|\<z\>^{s} u\|_{L^2(\C)} \leq  \|u\|_{{\HH}^s} \leq C\|\<z\>^{s}  u\|_{L^2(\C)}.
\end{equation}~

In the sequel, in order to alleviate notations, we simply write  $\wt{H}= \wt{H}_\tau$  and  $\widetilde{\HH}^{s} = \widetilde{\HH}_\tau^{s}$.
 \subsection{Definition of the operator $ \mathscr{L}(t)$}
 
 Define the potential $W_0(t,z)$ as follows:  
$$  V=\sqrt{\eps}\big(\frac12\phi_0-i\frac{\sqrt{3}}2\phi_1\big), \quad W_0(t,z)=|R_{\alpha t}V(z)|^2=\frac{\eps}{4\pi}\big|  1-i\sqrt{3}(z+\alpha t) \big|^2e^{-|z+\alpha t|^2}, \quad \alpha=\frac{\sqrt{3}}{32 \pi} \eps,$$
and with Lemma~\ref{lem.deri}, we  show that all the derivatives of $W_0$ are bounded uniformly in $t\in \R$:  
  \begin{equation} \label{bk2}
  \|\partial^j_{\ov z }  \partial^k_z W_0(t) \|_{L^{\infty}(\C)}=     \|\partial^j_{\ov z }  \partial^k_z \big(|R_{\alpha t}V|^2\big) \|_{L^{\infty}(\C)} \leq   C_{jk}   \| R_{\alpha t}V\|^2_{L^\infty} = C_{jk}   \| V\|^2_{L^\infty}  \leq \eps   C_{jk}. 
 \end{equation}
Now we define the mapping
 \begin{equation}\label{def-L}
 \begin{array}{rccl}
 \mathscr{L}(t) :  &\widetilde{\HH}^{s}&\longrightarrow&\widetilde{\HH}^{s}\\[3pt]
\dis & u &\longmapsto &  e^{-it \wt{H}}    \Pi\big( W_0(t)e^{it \wt{H}}  u\big)= e^{-it (H+1)^{\rho}}    \Pi\big( W_0(t)e^{it (H+1)^{\rho}}  u\big)  ,
 \end{array}
 \end{equation}
and we consider the initial value problem
  \begin{equation}\label{le-pb}
\left\{
\begin{aligned}
&i\partial_t u=\big(\wt{H}+\mathscr{L}(t) \big)u , \quad   (t,z)\in \R\times \C,\\
&u(t)_{|t=t_0}=  u_0 \in \widetilde{\HH}^{s}.
\end{aligned}
\right.
\end{equation}

 \subsection{Verification of Assumption~\ref{assumption1}}\label{para4.3} We now prove that $ \mathscr{L}(t)$ satisfies the required properties. ~

$(i)$ Let us check that  $\mathscr{L} \in \mathcal{C}_b\big(\R, \mathcal{L}(\widetilde{\HH}^{s})\big)$, with  norm $\| \mathscr{L}(t)  \|_{ \mathcal{L}(\widetilde{\HH}^{s})} \leq C_s \eps$.   First, by \eqref{eq-hs} it is equivalent to show that $\mathscr{L} \in \mathcal{C}_b\big(\R, \mathcal{L}({\HH}^{s})\big)$. Then, since $e^{it {(H+1)^{\rho}}} $ is unitary in ${\HH}^{s}$, and by \eqref{eq-hs1}
  \begin{eqnarray*}
\big\| e^{-it (H+1)^{\rho}}    \Pi\big( W_0(t)e^{it (H+1)^{\rho}}  u\big) \big\|_{{\HH}^{s}}&=&\big\|     \Pi\big( W_0(t)e^{it (H+1)^{\rho}}  u\big) \big\|_{{\HH}^{s}}\\
&\leq &C \| \langle z \rangle^{ s} \Pi \big( W_0(t)e^{it (H+1)^{\rho}}  u\big) \|_{L^2(\C)}.
\end{eqnarray*}  
Next, by \eqref{conti-zs} and \eqref{conti-flot}
  \begin{eqnarray*}
 \| \langle z \rangle^{ s} \Pi \big( W_0(t)e^{it (H+1)^{\rho}}  u\big) \|_{L^2(\C)} &\leq & C \| W_0(t)  \|_{L^\infty(\C)}      \| \langle z \rangle^{ s}  e^{it (H+1)^{\rho}}  u \|_{L^2(\C)}\\
 &= & C \| W_0(t)  \|_{L^\infty(\C)}     \| \langle z \rangle^{ s}   u \|_{L^2(\C)}\\
 & \leq & C \| W_0(t)  \|_{L^\infty(\C)}     \|     u \|_{\HH^s}.
\end{eqnarray*}
Recall that $W_0(t)=|R_{\alpha t}V|^2$, where $V \in \E$, then $\| W_0(t)  \|_{L^\infty(\C)}  =\| V\|^2_{L^\infty} \leq C \eps $. Putting all the previous estimates toghether, we obtain
$$\big\| e^{-it (H+1)^{\rho}}    \Pi\big( W_0(t)e^{it (H+1)^{\rho}}  u\big) \big\|_{{\HH}^{s}} \leq C_s\eps \|  u\|_{{\HH}^{s}},$$
hence the announced bound. The time-continuity of $\mathscr{L}$  follows from the previous estimates together with the continuity of the translations for the Lebesgue measure and the fact that $e^{it \wt{H}} \in \mathcal{C}_b\big(\R, \mathcal{L}(\widetilde{\HH}^{s})\big)$.
\medskip

$(ii)$  The symmetry of $\mathscr{L}$, w.r.t. the scalar product of $\widetilde{\HH}^{0}=\E$,  is a consequence of the symmetry of~$\Pi$, the conjugation by the unitary operator $e^{it {(H+1)^{\rho}}} $  and the fact that $W_0$ is a real valued function.    \medskip

$(iii)$  Let us check that $ \big[\wt{H}, \L(t)\big]$ is $\wt{H}^{\tau}$-bounded.  By Lemma \ref{lem-comm}, the operators $H$ and $\Pi$ commute, thus 
   \begin{eqnarray*}\big[\wt{H}, \L(t)\big]\wt{H}^{-\tau}&=&\big[(H+1)^{\rho}, \L(t)\big](H+1)^{-\rho \tau}\\
   &=& e^{-it (H+1)^{\rho}}\Pi\big[(H+1)^{\rho} , W_0(t)\big](H+1)^{-\rho \tau}e^{it (H+1)^{\rho}}.
   \end{eqnarray*}
Recall that $\Pi$ is bounded in all the $\HH^s$ spaces, as well as the operators $e^{-it (H+1)^{\rho}}$. \medskip

$\bullet$ Case $s=0$. Let us first  prove that $\Pi\big[(H+1)^{\rho} , W_0(t)\big](H+1)^{-\rho \tau} : \E \longrightarrow \E$ is bounded, uniformly in $t\in \R$. For that,  we  use the Weyl-H\"ormander pseudo-differential calculus (we refer to~\cite{Robert, Helffer} or to~\cite[Chapter~3]{Parmeggiani} for a review of this theory). Denote by $ z=x_1+ix_2$,  $\xi=\xi_1+\xi_2$, and consider the metric 
\begin{equation}\label{metric} 
dx_1^2+dx_2^2+\frac{d\xi_1^2+d\xi_2^2}{1+|z|^2+|\xi|^2}.
\end{equation} 
The Planck function  associated to the metric \eqref{metric} is given by $h(x_1,x_2,\xi_1,\xi_2)=(1+|z|^2+|\xi|^2)^{-1/2}$ and  for $m \in \R$, the symbol class  $S^m$ is 
\begin{equation*} 
S^m =\Big\{ a \in \mathcal{C}^\infty(\R^4; \C)\,:  \; \big|\partial_{x_1}^{\alpha_1} \partial_{x_2}^{\alpha_2} \partial_{\xi_1}^{\beta_1}\partial_{\xi_2}^{\beta_2} a(x_1,x_2,\xi_1,\xi_2)\big|
\leq C_{\alpha,\beta}\langle |z|+|\xi|\rangle^{m-\beta_1-\beta_2}, \;\;  \forall \, \alpha, \beta \in \N^2\Big\}.
\end{equation*}
For $a \in S^m$, we define its Weyl-quantization by the formula
$$a^w(x,D)u(x)=\frac1{(2\pi)^2}\int_{\R^2}\int_{\R^2} e^{i(x-y)\cdot \xi}a(\frac{x+y}2,\xi ) u(y)dyd \xi, \quad u\in \mathscr{S}(\R^2).$$
First, using the functional calculus associated to the operator $H$, we obtain that   $(H+1)^\rho$ is a pseudo-differential operator with symbol in $S^{2\rho}$. By \eqref{bk2}, $W_0(t)\in S^0$ uniformly in $t\in \R$, and therefore   the commutator
$\big[(H+1)^{\rho}, W_0(t)\big]$ is a pseudo-differential operator with symbol in $S^{2\rho-1}$, which in turn implies that  $\big[(H+1)^{\rho}, W_0(t)\big](H+1)^{-\rho \tau}$  is a pseudo-differential operator with symbol in $S^{0}$ (because $2\rho-1-2\rho \tau=0$), hence it is bounded. \medskip

$\bullet$ The proof in the general case $s\geq 0$ is similar. \medskip

 $(iv)$ From \eqref{def-L}, a direct computation gives 
\begin{eqnarray}\label{zz}
\partial_t  \L(t)&=& e^{-it \wt{H}}    \Pi  (\partial_t W_0(t))e^{it \wt{H}} -i e^{-it \wt{H}}    \Pi  \big[\wt{H}, W_0(t)\big]e^{it \wt{H}}\nonumber \\
&=& e^{-it \wt{H}}    \Pi  (\partial_t W_0(t))e^{it \wt{H}} -i     \big[\wt{H}, \L(t)\big] .
  \end{eqnarray}
  From the expression of $W_0$, we deduce that $\sup_{t \in\R}\| \partial^{\ell}_t W_0(t)\|_{L^{\infty}(\C)} \leq C_{\ell}$ for all $\ell \geq 0$. In particular, the first term in the right hand side of~\eqref{zz} is bounded $\widetilde{\HH}^{s} \longrightarrow  \widetilde{\HH}^{s}$. In item $(iii)$ we have shown that  $[\wt{H}, \L(t)\big] :  \widetilde{\HH}^{s} \longrightarrow  \widetilde{\HH}^{s-\tau}$ is bounded,  uniformly in $t\in \R$. As a consequence,  for all $s\geq 0$, $\L \in \mathcal{C}^{1}_b\big(\R; \mathcal{L}(\widetilde{\HH}^{s} ; \widetilde{\HH}^{s- \tau})\big)$. The general case $\ell\geq 0$ is obtained by induction.

\subsection{Proof of  Theorem~\ref{thm-abs}} We are now ready to complete the proof of Theorem~\ref{thm-abs}. Consider the problem~\eqref{le-pb} and for convenience, assume that $t_0=0$. By~\eqref{def-L}, the equation~\eqref{le-pb} is equivalent to 
\begin{equation*} 
\left\{
\begin{aligned}
&i\partial_t v= \Pi\big( W_0(t)  v\big) , \quad   (t,z)\in \R\times \C,\\
&v(0,\cdot)=  u_0 \in \widetilde{\HH}^{s}={\HH}^{\rho s},
\end{aligned}
\right.
\end{equation*}
with the change of unknown $v=e^{it (H+1)^{\rho}}u$. As a consequence we can directly apply the results of Section~\ref{Sect2} (case $\delta=0$) to this model.\medskip

$(i)$ The fact that   $\| \mathscr{L}(t)  \|_{ \mathcal{L}(\widetilde{\HH}^{s})} \leq C_s \eps$ has already been shown in the previous paragraph.\medskip

$(ii)$ For all $s\geq 0$, the problem~\eqref{le-pb} is globally well-posed, in $\widetilde{\HH}^{s}$ by Theorem~\ref{thm2.1}. The group property of $\mathcal{U}$ is a consequence of uniqueness, and its unitarity follows from the conservation of the $L^2$ norm.\medskip

$(iii)$ The upper bound is given by Theorem~\ref{thm2.2}, namely, for all $t\in \R$
 \begin{equation*} 
\| u(t)\|_{\widetilde{\HH}^{s}}   \leq C \|\<z\>^{\rho s} u(t)\|_{L^{2}(\C)} \leq C\|\<z\>^{\rho s} u_0\|_{L^{2}(\C)}  \< \eps t\>^{\rho s}  \leq C\| u_0\|_{\widetilde{\HH}^{s}}    \< \eps t\>^{\rho s},
\end{equation*}
where $\rho= \frac{1}{2(1-\tau)}$. \medskip

$(iv)$  Consider the function $\dis u_0 \in \cap_{k\geq 1} L^{2,k}_{\E}= \cap_{k\geq 1}  \widetilde{\HH}^{k}  $ given by Theorem~\ref{thm2.3}, (see paragraph~\ref{para3.3}) then $\| u_0\|_{\widetilde{\HH}^{s}}  \leq C \sqrt{\eps}$ and 
$$\| u(t)\|_{\widetilde{\HH}^{s}}   \geq c  \| \<z\>^{\rho s} u(t)\|_{L^2(\C)}   \geq c \sqrt{\eps}\<\eps t\>^{\rho s} \geq c\| u_0\|_{\widetilde{\HH}^{s}}  \<\eps t\>^{\rho s} ,$$
hence the result. \medskip

Notice that the items $(ii)$ and $(iii)$ also directly follow from the general result~\cite[Theorem~1.5]{MasRo}.  
 \appendix
 
   \section{A non-perturbative \& time-independent example}\label{appendixA}
   
In this section, we give another example of linear Schr\"odinger operator which yields unbounded dynamics, and which meets  the assumptions (H0)-(H3) of \cite{MasRo}. This example  differs from the one exhibited in Theorem~\ref{thm-abs} in two main aspects :

\begin{itemize}
\item[$\bullet$] it is a non-perturbative example : it is not a lower order perturbation of a time-independent  elliptic differential operator ; 
\item[$\bullet$] it is time-independent.
\end{itemize}

However, with a change of unknown, we can obtain a time-dependent perturbation of a constant coefficient self-adjoint elliptic operator, see Remark~\ref{remchgt} below. \medskip

This example is very simple, and that is why we decided to develop it here. Actually the mechanism involved in the norm inflation is the same as in Theorem~\ref{thm-abs}: it is a traveling wave measured in a weighted $L^2$ space. Actually, our example is  close to the one developed in~\cite[Appendix A]{BGMR2}, after change of variables.  \medskip

On $L^2(\R)$ we define the  usual harmonic oscillator $H=-\partial^2_x +x^2$.  For  $0\leq \tau <1$, we  set $\rho(\tau)=\frac1{2(1-\tau)} \in [1/2 ,\infty)$. We define the operator $\wt{H}=(H+1)^{\rho(\tau)}$ and the  scale  of Hilbert spaces $\big(\widetilde{\HH}^{s}\big)_{s\geq 0}$ by 
\begin{equation*} 
\widetilde{\HH}^{s} = \big\{ u\in L^2(\C),\; \wt{H}^{s/2}u\in L^2(\R)\big\}, \qquad  \widetilde{\HH}^{0} =L^2(\R),
\end{equation*}
endowed with the natural norm  $\|u\|_{\widetilde{\HH}^{s}}    :=   \|  \wt{H}^{s/2}u \|_{L^2(\R)} $.  By \cite[Lemma 2.4]{YajimaZhang2}, we have the following equivalence of norms
\begin{equation} \label{equiv}
\|u\|_{\widetilde{\HH}^{s}}      \equiv \|\<x\>^{\rho s}u\|_{L^2(\R)}+\|(-\partial^2_x)^{\rho s/2}u\|_{L^2(\R)}.
\end{equation}
Now, for $\eps>0$, we consider the problem, 
   \begin{equation} \label{partial}
\left\{
\begin{aligned}
&i\partial_t u=-i\eps \partial_x u , \quad   (t,x)\in \R\times \R,\\
&u(0, \cdot)=  u_0 \in \widetilde{\HH}^{s}.
\end{aligned}
\right.
\end{equation}

In this framework, we are able to prove the following result for the operator $i \eps \partial_x$ in the spaces~$\widetilde{\HH}^{s} $:

  \begin{lem}\label{lem-partial}  
\begin{enumerate}[$(i)$]
 \item  One has $i \eps \partial_x \in  \mathcal{L}(\widetilde{\HH}^{s+1/\rho}, \widetilde{\HH}^{s})\big)$ for all $s \geq 0$, and 
 $$\| i \eps\partial_x \|_{ \mathcal{L}(\widetilde{\HH}^{s+1/\rho}, \widetilde{\HH}^{s})\big)} \leq C_s \eps.$$
  \item  The operator $i \eps \partial_x$ is symmetric  on $ \widetilde{\HH}^{1/\rho}$ w.r.t. the scalar product of $\widetilde{\HH}^{0}$, 
  $$\int_{\R} \ov{v} (i\eps  \partial_x u)  \,dx=  \int_{\R}   u\ov{(i\eps \partial_x v) }\,dx, \quad \forall{u,v \in \widetilde{\HH}^{1/\rho}}.$$
   \item   The   operator $i \eps \partial_x$  is $\wt{H}^{\tau}$-bounded in the sense that 
 $  [i \eps \partial_x, \wt{H}] \wt{H}^{-\tau} \in  \mathcal{L}(\widetilde{\HH}^{s})$ for all $s \geq 0$.
 \end{enumerate}
\end{lem}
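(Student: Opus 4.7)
The plan is to reduce all three items to standard one-dimensional Weyl-H\"ormander pseudo-differential calculus on $T^*\R$, with the metric $g = dx^2 + d\xi^2/(1+x^2+\xi^2)$ and symbol classes $S^m$ defined in the manner of paragraph~\ref{para4.3}. By the Helffer-Robert functional calculus (see \cite{Robert,Helffer}), $(H+1)^\sigma$ is a pseudo-differential operator with symbol in $S^{2\sigma}$ for every $\sigma \in \R$; in particular, by \eqref{equiv}, the scale $\widetilde{\HH}^s$ coincides with the harmonic Sobolev scale of order $\rho s$. The crucial algebraic input throughout is the identity $2\rho(1-\tau) = 1$, which is just the definition of $\rho=\rho(\tau)$.

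For item $(i)$, I shall conjugate: the continuity of $i\eps\partial_x : \widetilde{\HH}^{s+1/\rho} \to \widetilde{\HH}^s$ with norm $O(\eps)$ is equivalent to the $L^2$-boundedness (with norm $O(\eps)$) of
\begin{equation*}
(H+1)^{\rho s /2}\,(i\eps \partial_x)\,(H+1)^{-\rho s/2 - 1/2}.
\end{equation*}
The symbol of $\partial_x$ is $i\xi \in S^1$, so by symbolic composition this operator has symbol in $S^{\rho s}\cdot S^1\cdot S^{-\rho s - 1}=S^0$, hence is bounded on $L^2(\R)$ by the Calder\'on-Vaillancourt theorem, the $\eps$ factor being preserved by linearity. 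Item $(ii)$ is then a direct integration by parts for $u,v \in \mathscr{S}(\R)$, extended to $\widetilde{\HH}^{1/\rho}$ by density of Schwartz functions and by the continuity just established in $(i)$, which ensures that both sides of the symmetry identity are continuous in $(u,v) \in \widetilde{\HH}^{1/\rho}\times\widetilde{\HH}^{1/\rho}$.

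For item $(iii)$, I start from the elementary commutator $[\partial_x, H] = [\partial_x, x^2] = 2x$, a multiplication operator with symbol in $S^1$. By Helffer-Robert functional calculus, $[\partial_x, (H+1)^\rho]$ is a pseudo-differential operator of order $2\rho-1$ (the commutator gains exactly one order with respect to $(H+1)^\rho \in \mathrm{Op}(S^{2\rho})$). Composing with $\wt{H}^{-\tau}=(H+1)^{-\rho\tau}\in \mathrm{Op}(S^{-2\rho\tau})$ then yields an operator with symbol in $S^{2\rho-1-2\rho\tau}=S^{2\rho(1-\tau)-1}=S^0$, precisely because $2\rho(1-\tau)=1$. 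Such an operator is bounded on every $\HH^\sigma$, and hence, via \eqref{equiv}, on every~$\widetilde{\HH}^s$. The main technical point is tracking the pseudo-differential order of $[\partial_x,(H+1)^\rho]$ for non-integer $\rho$; the fact that the final symbol is exactly of order zero, with no room to spare, is what singles out $\rho(\tau)$ as the correct exponent in the definition of the scale~$\widetilde{\HH}^s$.
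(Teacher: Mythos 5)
Your proposal is correct and follows essentially the same route as the paper: symbols in the Weyl--H\"ormander class adapted to $H$, the Poisson-bracket/commutator bookkeeping showing $[i\partial_x,(H+1)^{\rho}]$ has order $2\rho-1$, and the closing identity $2\rho-1-2\rho\tau=0$. The only (minor) tactical differences are that for $(i)$ the paper invokes the norm equivalence \eqref{equiv} directly instead of conjugating, and for extending $(iii)$ from $L^2$ to all $\HH^{\sigma}$ the paper uses an algebraic commutator identity together with two applications of the $L^2$ bound, whereas you conjugate by $(H+1)^{\sigma/2}$ and use $L^2$-boundedness of $S^0$ operators --- both arguments are standard within the same calculus.
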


Therefore the operator $i \partial_x$ satisfies  the assumptions (H0)-(H3) of \cite{MasRo}. \medskip

On the other hand, we have the following elementary result:

 \begin{prop} \label{thm-abs2} Let $s\geq 0$, then 
 \begin{enumerate}[$(i)$]
\item The problem \eqref{partial} is globally well-posed in $\widetilde{\HH}^{s}$, and the solution is explicitly given by 
$$u(t,x)=u_0(x-\eps t).$$
\item The following bounds hold true : for all $u_0\in \widetilde{\HH}^{s}$ and for all $t\in \R$
 \begin{equation} \label{bound.prop}
c \<\eps t\>^{\frac s{2(1-\tau)}}   \| u_0\|_{\widetilde{\HH}^{s}}  \leq       \| u(t)\|_{\widetilde{\HH}^{s}}\leq C  \<\eps t\>^{\frac s{2(1-\tau)}}\| u_0\|_{\widetilde{\HH}^{s}} .
\end{equation}
 \end{enumerate}
\end{prop}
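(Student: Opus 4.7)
The plan is to observe first that equation \eqref{partial} reads $\partial_t u = -\epsilon \partial_x u$, the one-dimensional free transport equation. Its explicit solution $u(t,x) = u_0(x - \epsilon t)$ is then verified by direct differentiation, and uniqueness follows from linearity together with conservation of the $L^2$ norm (translations being $L^2$-isometries). For item $(i)$ it only remains to check that $t \mapsto u_0(\cdot - \epsilon t)$ is continuous with values in $\widetilde{\HH}^s$, which I would do through the norm equivalence \eqref{equiv} combined with the density of $\mathscr{S}(\R)$ in $\widetilde{\HH}^s$.

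For item $(ii)$ the key step is to apply \eqref{equiv} to write
\[
\|u(t)\|_{\widetilde{\HH}^s}^2 \simeq \|\langle x\rangle^{\rho s} u(t)\|_{L^2}^2 + \|(-\partial_x^2)^{\rho s/2} u(t)\|_{L^2}^2.
\]
The operator $(-\partial_x^2)^{\rho s/2}$ commutes with spatial translations, so the second summand is constant in $t$ and equal to $\|(-\partial_x^2)^{\rho s/2} u_0\|_{L^2}^2$; the change of variable $x \mapsto x + \epsilon t$ rewrites the first summand as $\|\langle x + \epsilon t\rangle^{\rho s} u_0\|_{L^2}^2$, so that the whole $t$-dependence is concentrated in this single weighted integral.

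The upper bound in \eqref{bound.prop} is then immediate from the Peetre-type inequality $\langle x + \epsilon t\rangle \leq \sqrt{2}\langle x\rangle \langle \epsilon t\rangle$. For the lower bound I would split the integration at $|x| = |\epsilon t|/2$: by the reverse triangle inequality, on $\{|x|\leq|\epsilon t|/2\}$ one has $\langle x + \epsilon t\rangle \geq c\langle \epsilon t\rangle$, so this region alone contributes at least $c\langle\epsilon t\rangle^{2\rho s}\int_{|x|\leq |\epsilon t|/2}|u_0|^2\,dx$, and for $|\epsilon t|$ large enough this integral captures a fixed fraction of $\|u_0\|_{L^2}^2$; the short-time regime is handled by a continuity argument since both sides of \eqref{bound.prop} are positive and bounded on any compact time interval. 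The only step that really requires care, and the main technical nuance of the argument, is reconciling the two pieces of \eqref{equiv} so that the resulting lower bound is expressed against the full $\widetilde{\HH}^s$-norm of $u_0$ rather than merely against $\|u_0\|_{L^2}$; the constant $c$ in \eqref{bound.prop} will then depend on how the mass of $u_0$ is distributed between its weighted and its derivative parts.
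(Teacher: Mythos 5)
Your proposal is correct and follows essentially the paper's own (one-line) argument: the paper simply invokes the equivalence \eqref{equiv} together with $\rho(\tau)=\frac{1}{2(1-\tau)}$, and your implementation -- translation invariance of the $(-\partial_x^2)^{\rho s/2}$ part, the Peetre inequality for the upper bound, and the splitting at $|x|=|\eps t|/2$ for the lower bound -- is the natural way to carry this out. The ``nuance'' you flag at the end is real, but it is a feature of the statement rather than a defect of your proof: for $s>0$ the lower bound in \eqref{bound.prop} cannot hold with $c$ independent of $u_0$ (take $u_0(x)=e^{iNx}\chi(x)$ with $\chi$ a fixed bump and $\<\eps t\>\sim N\to\infty$; then $\| u(t)\|_{\widetilde{\HH}^{s}}$ is of size $(\<\eps t\>^{\rho s}+N^{\rho s})\|\chi\|_{L^2}$ whereas the claimed lower bound is of size $\<\eps t\>^{\rho s}N^{\rho s}\|\chi\|_{L^2}$), so the constant must be allowed to depend on $u_0$, roughly $c\sim \|u_0\|_{L^2(\R)}/\|u_0\|_{\widetilde{\HH}^{s}}$. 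What your splitting argument gives uniformly in $u_0$ is $\|u(t)\|_{\widetilde{\HH}^{s}}\geq c\<\eps t\>^{\rho s}\|u_0\|_{L^2(\R)}$ for $|\eps t|$ beyond a $u_0$-dependent threshold, which is the sharp form of the growth and is exactly parallel to the convention the paper makes explicit after Theorem~\ref{thm2.3} (cf.\ \eqref{eq-l2}), so you should simply state that $c$ depends on the fixed initial datum.
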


This result is directly obtained using \eqref{equiv} and the expression $\rho(\tau)=\frac1{2(1-\tau)}$.

\begin{rem}\label{remchgt}
Notice that the function $v(t)=e^{-it \wt{H}}u(t)$ is solution to
   \begin{equation*} 
\left\{
\begin{aligned}
&i\partial_t v-\wt{H}v=-i \eps \big(e^{-it \wt{H}}  \partial_x e^{it \wt{H}}\big)v , \quad   (t,x)\in \R\times \R,\\
&v(0, \cdot)=  v_0=u_0 \in \widetilde{\HH}^{s},
\end{aligned}
\right.
\end{equation*} 
and satisfies the conclusions of Lemma \ref{lem-partial} and the bounds~\eqref{bound.prop}. This yields an example in the spirit of the one exhibited in Theorem~\ref{thm-abs}, but in the present case, the perturbation is of order~$1/\rho \in (0,2]$ instead of being of order 0. 
\end{rem}

\begin{proof}[Proof of Lemma~\ref{lem-partial}] Item $(i)$ is a direct consequence of~\eqref{equiv} and  $(ii)$  is elementary. 

$(iii)$ It is convenient to introduce the Sobolev space based on the harmonic oscillator ($s\geq 0$)
\begin{equation*} 
{\HH}^{s} = \big\{ u\in L^2(\R),\; H^{s/2}u\in L^2(\R)\big\}, \qquad  {\HH}^{0} =L^2(\R).
\end{equation*}
Thanks to the pseudo-differential calculus associated to $H$ (see also paragraph~\ref{para4.3}), we first prove that    for all $r>0$,
\begin{equation}\label{com-r} 
    [i \partial_x, (H+1)^r  ]  (H+1)^{-r+\frac12}   \in  \mathcal{L}\big(L^2(\R)\big).
    \end{equation}
Here the symbol class $S^m$ reads
\begin{equation*} 
S^m =\Big\{ a \in \mathcal{C}^\infty(\R^2; \C)  :  \; \big|\partial_{x}^{\alpha} \partial_{\xi}^{\beta}  a(x, \xi)\big|
\leq C_{\alpha,\beta}\langle |x|+|\xi|\rangle^{m-\beta}, \;\;  \forall \, \alpha, \beta \in \N\Big\}.
\end{equation*}
The  symbol of $    [i \partial_x, (H+1)^r  ]  $, modulo terms in $S^{2r-1}$, is given by the formula
\begin{eqnarray*} 
-i \big\{ \xi , (x^2 +\xi^2+1)^{r} \big\} &=&-i\partial_{\xi}  (\xi) \partial_{x} \big(  (x^2 +\xi^2+1)^{r} \big)+i\partial_{x}  (\xi) \partial_{\xi} \big(  (x^2 +\xi^2+1)^{r} \big) \\
&=&-i  2r x (x^2 +\xi^2+1)^{r-1}  \in S^{2r-1}.
\end{eqnarray*} 
Since the symbol of $ (H+1)^{-r+\frac12} $ belongs to $S^{-2r+1}$, we deduce \eqref{com-r}. \medskip

 Next, for $\rho>0$ and $s \geq 0$
\begin{multline*}
 (H+1)^{\frac{s}2}   [i \partial_x, (H+1)^{\rho}  ]  (H+1)^{-\rho+\frac12} (H+1)^{-\frac{s}2}=\\
 =- [i \partial_x, (H+1)^\frac{s}2  ] (H+1)^{-\frac{s}2  +\frac12} + [i \partial_x, (H+1)^{\frac{s}2+\rho}  ] (H+1)^{-(\frac{s}2  +\rho)+\frac12}, 
 \end{multline*}
and by applying \eqref{com-r} twice, we deduce that for all $s\geq 0$
\begin{equation}\label{com-q} 
    [i \partial_x, (H+1)^{\rho}  ]  (H+1)^{-\rho+\frac12}   \in  \mathcal{L}\big({\HH}^{s}\big).
    \end{equation}
    Finally recall that $\wt{H}=(H+1)^{\rho}$, thus \eqref{com-q} is equivalent to 
    \begin{equation*} 
    [i \partial_x,  \wt{H}  ]  \wt{H}^{- \tau }   \in  \mathcal{L}\big(\wt{\HH}^{s}\big),
    \end{equation*}
since  $\tau = 1-1/(2 \rho)$, which was the claim.
\end{proof}


  \section{On the reducibility of the linear LLL equation}\label{appendixB}
 
 We state here a reducibility result for the linear LLL equation. It turns out that the abstract reducibility result obtained in~\cite{Grebert-Thomann} can be applied to this model, which is close in many aspects to the  usual 1D cubic quantum harmonic oscillator with time-dependent potential. We consider the linear equation
  \begin{equation}\label{linH-KAM} 
\left\{
\begin{aligned}
&i\partial_t u-\delta Hu=\eps \Pi(W(t \om,z)u), \quad   (t,z)\in \R\times \C,\\
&u(0,z)=  u_0(z),
\end{aligned}
\right.
\end{equation}
where $\delta \neq 0$, where $\epsilon >0$ is small and where  the parameter    $\omega \in [0,2\pi)^n$ is the frequency vector, for some given $n \geq 1$. Up to a rescaling, we can assume that $\delta =1$. We   assume in the sequel that the potential
 \begin{equation*}
 \begin{array}{rccl}
W :  &\T^n\times \C&\longrightarrow&\R \qquad \qquad\qquad \T^n: =(\R/2\pi \Z)^n\\[3pt]
\dis & (\theta, z) &\longmapsto &  W(\theta,z),
 \end{array}
 \end{equation*}
is analytic in $\theta$ on $|\text{Im}\,\theta|<\tau$ for some $\tau>0$, and $\mathcal{C}^{2}$ in $x,y$ (where $z=x+iy$), and we suppose moreover that there exists $\gamma>0$ and $C>0$ so that for all $\theta \in \T^{n}$ and $z\in \C$ 
\begin{equation}\label{cond-W}
 |W(\theta,z)|\leq C\<z\>^{-\gamma},\qquad\; |\partial^j_{z}\partial^{\ell}_{\ov z}W(\theta,z)|\leq C,
 \end{equation}
 for any $0 \leq j, \ell \leq 1$. \medskip
 
 When $\om =0$, all the solutions to \eqref{linH-KAM} are almost periodic in time. This can be proved by constructing a Hilbertian basis\footnote{Such a Hilbertian basis exists, since $u \mapsto Hu+\eps \Pi(W(0,z)u)$ is a self-adjoint operator with compact resolvent in~$\E$.} $(\psi_k)_{k \geq 0}$ of $\E$ composed   of eigenfunctions of the   operator $u \mapsto Hu+\eps \Pi(W(0,z)u)$, such that 
 $$H\psi_k+\eps \Pi\big(W(0,z)\psi_k\big)=\lambda_k \psi_k, \quad k \geq 0.$$
 Then \eqref{linH-KAM} can be solved by
 $$   u(t,z)= \sum_{k=0}^{+\infty}c_k e^{-it \lambda_k } \psi_k(z), \qquad u_0(z)= \sum_{k=0}^{+\infty}c_k   \psi_k(z),$$
which shows that any solution to \eqref{linH-KAM} is an infinite superposition of periodic functions, hence it is an almost-periodic function in time. \medskip

For $\om \neq 0$, the reducibility theory adresses the question if, by the means of a  time quasi-periodic transformation, one can reduce to the previous case. It turns out that for \eqref{linH-KAM}, it is the case for a large set of values $\om \in \Lambda_{\eps} $ :

 \begin{thm}\label{theo-lin-LLL} Assume that $W$ satisfies \eqref{cond-W}. Then there exists $\epsilon_0$ such that for all $0\leq\epsilon<\epsilon_0$ there exists a set  $\Lambda_{\eps} \subset [0,2\pi)^n$ of positive measure and asymptotically full measure: $\mbox{Meas}(\Lambda_{\eps} ) \to (2\pi)^n$ as $\epsilon \to 0$, such that for all 
$\omega\in \Lambda_{\eps} $, the linear   equation \eqref{linH-KAM} reduces, in $\E$, to a  linear equation with constant coefficients.
  \end{thm}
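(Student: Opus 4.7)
The strategy is to decompose~\eqref{linH-KAM} in the special Hermite basis and to verify the hypotheses of the abstract reducibility theorem of~\cite{Grebert-Thomann}. Writing $u(t,z)=\sum_{n\geq 0} u_n(t)\phi_n(z)$, and using that $\phi_m \in \E$ together with the self-adjointness of $\Pi$ on $L^2(\C)$, the equation becomes the coupled infinite system
\[
i\dot{u}_n=\lambda_n u_n+\eps\sum_{m\geq 0}\mathcal{V}_{nm}(t\om)u_m,\qquad \lambda_n=2(n+1),
\]
with matrix elements
\[
\mathcal{V}_{nm}(\theta)=\int_\C \ov{\phi_n(z)}\,W(\theta,z)\,\phi_m(z)\,dL(z)=\frac{1}{\pi\sqrt{n!\,m!}}\int_\C W(\theta,z)\,\ov{z}^n z^m e^{-|z|^2}\,dL(z).
\]
Three things must be checked for the abstract theorem to apply: a spectral gap and second Melnikov condition for $(\lambda_n)$, off-diagonal decay and tameness of the matrix $(\mathcal{V}_{nm}(\theta))$, and analyticity in $\theta$.

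The spectral side is essentially free. The eigenvalues $\lambda_n=2(n+1)$ have the uniform gap $\lambda_{n+1}-\lambda_n=2$, and all differences $\lambda_n-\lambda_m=2(n-m)$ are integer multiples of $2$, so the second Melnikov condition $|\<k,\om\>+\lambda_n-\lambda_m|\geq \gamma\,\<k\>^{-\tau_0}$ (for $(k,n-m)\neq 0$) defines a set $\Lambda_\eps\subset[0,2\pi)^n$ of asymptotically full measure as $\eps\to 0$, by a standard Fubini-plus-Borel-Cantelli argument on the parameter $\gamma$. The analyticity of $W$ in $\theta$ on $|\Im\,\theta|<\tau$ transfers without loss to the matrix entries $\theta\mapsto\mathcal{V}_{nm}(\theta)$, keeping the same strip width.

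The technical core is the estimate of $\mathcal{V}_{nm}(\theta)$. Passing to polar coordinates $z=re^{i\varphi}$ yields
\[
\mathcal{V}_{nm}(\theta)=\frac{1}{\pi\sqrt{n!\,m!}}\int_0^{+\infty}\!\!\int_0^{2\pi}r^{n+m+1}e^{i(m-n)\varphi}\,W(\theta,re^{i\varphi})\,e^{-r^2}\,d\varphi\,dr,
\]
from which one extracts two kinds of decay: decay in $n+m$ by combining the sharp concentration of $r^{n+m+1}e^{-r^2}$ near $r\sim \sqrt{(n+m+1)/2}$ with the spatial decay $|W|\leq C\<z\>^{-\gamma}$ from~\eqref{cond-W}; and decay in $|n-m|$ by integrating by parts twice in $\varphi$, absorbing the angular derivatives into the bound on $\partial_z\partial_{\ov z}W$ via~\eqref{not-der}. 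The main obstacle I anticipate is quantitative: the abstract framework of~\cite{Grebert-Thomann} is formulated for the 1D quantum harmonic oscillator on $L^2(\R)$, so one has to rephrase the bounds just derived in precisely the tame/off-diagonal/Lipschitz-in-$\om$ form required there, and check that the perturbation $u\mapsto\Pi(W(\theta,\cdot)u)$ fits the abstract perturbation class. This matching is made possible by the close parallel between the Hermite and special Hermite bases (constant spectral gaps, analogous Gaussian-times-polynomial eigenfunctions). Once performed, the KAM iteration produces, for every $\om\in\Lambda_\eps$, a convergent sequence of analytic quasi-periodic unitaries on $\E$ that conjugates $H+\eps\,\Pi(W(t\om,\cdot)\,\cdot\,)$ to a time-independent self-adjoint operator on $\E$ with pure point spectrum, which is the claimed reduction to constant coefficients.
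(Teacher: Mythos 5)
Your overall strategy is the same as the paper's: expand in the special Hermite basis $(\phi_n)_{n\geq 0}$ and invoke the abstract reducibility theorem of Gr\'ebert--Thomann. The paper does exactly this, recasting \eqref{linH-KAM} as an autonomous Hamiltonian system in an extended phase space whose perturbation is the quadratic form $Q(\theta,c,\bar c)$ built from $W$, and then checking the hypotheses of \cite[Theorem 7.1]{Grebert-Thomann}; the key ingredient it isolates is the dispersive estimate $\|\phi_n\|_{L^{\infty}(\C)}\leq C n^{-1/4}$, which, combined with the decay $|W(\theta,z)|\leq C\langle z\rangle^{-\gamma}$ and the analyticity in $\theta$, yields the decay of the matrix elements required by the abstract perturbation class, exactly as in \cite[Section 7]{Grebert-Thomann} for the 1D harmonic oscillator. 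Your treatment of the spectral side (constant gap, second Melnikov conditions, full measure of $\Lambda_\eps$) is consistent with that scheme.

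Where your proposal genuinely deviates is in the verification of the perturbation class, and there it has a gap. You propose to obtain decay in $|n-m|$ by integrating by parts twice in the angular variable and absorbing the derivatives into the bound on $\partial_z\partial_{\ov z}W$. But $\partial_\varphi=i(z\partial_z-\ov z\partial_{\ov z})$, so $\partial_\varphi^2 W$ contains $z^2\partial_z^2 W$ and $\ov z^2\partial_{\ov z}^2 W$: hypothesis \eqref{cond-W} only controls $\partial^j_z\partial^{\ell}_{\ov z}W$ for $0\leq j,\ell\leq 1$ (and $W$ is only assumed $\mathcal{C}^2$ in $x,y$), so these pure second derivatives are not available, and already one angular derivative only gives a bound growing like $|z|$. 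Moreover, decay in $|n-m|$ is not the form demanded by \cite[Theorem 7.1]{Grebert-Thomann}: the perturbation class there is defined by decay in the indices $n,m$ themselves (together with analyticity in $\theta$), which is precisely what the eigenfunction bound $\|\phi_n\|_{L^{\infty}(\C)}\leq Cn^{-1/4}$ plus the spatial decay of $W$ delivers, with no integration by parts. So the step you yourself flag as the main obstacle --- matching your estimates to the abstract framework --- is where the real content lies, and the route you sketch for it both uses derivatives of $W$ that \eqref{cond-W} does not provide and aims at a type of decay that is not the one needed; replacing it by the dispersive estimate, as the paper does, closes the argument.
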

  
  We refer to \cite[Theorem 7.1]{Grebert-Thomann} for a more precise statement, giving in particular more information on the transformation. \medskip

Assume that $(\theta, z) \mapsto V(\theta,z)$ is analytic in $\theta$ on $|\text{Im}\,\theta|<\tau$,  that $V(\theta, \cdot) \in \E$ for all $\theta \in \T^n$, and satisfies, for some $\gamma>0$, the bound  $|V(\theta,z)|\leq C\<z\>^{-\gamma}$ uniformly in $\theta \in \T^n$. Then, by Lemma~\ref{lem.deri},  $W=|V|^2$ satisfies \eqref{cond-W}. Such a potential even satisfies $|\partial^j_{z}\partial^{\ell}_{\ov z}W(\theta,z)|\leq C$ for all $k, \ell \in \N$ (without additional assumptions on $V \in \E$). \medskip

We also have the   following result on  the dynamics of the solutions of \eqref{linH-KAM} :
  \begin{cor} 
  Assume that $W$ is  $\mathcal{C}^{\infty}$ in $x,y$ with all its derivatives bounded  and satisfying  \eqref{cond-W}. Let $s\geq 0$ and $u_{0}\in \HH^{s}$. Then there exists $\eps_{0}>0$ so that for all $0<\eps<\eps_{0}$ and $\om \in \Lambda_{\eps}$,    there exists a unique solution $u \in \mathcal{C}\big(\R\,;\,\HH^{s}\big)$ of \eqref{linH-KAM} so that $u(0)=u_{0}$. Moreover, $u$ is almost-periodic in time and we have the bounds 
  \begin{equation*}
  (1-\eps C)\|u_{0}\|_{\HH^{s}}\leq \|u(t)\|_{\HH^{s}}\leq  (1+\eps C)\|u_{0}\|_{\HH^{s}}, \qquad \forall \,t\in \R,
  \end{equation*}
  for some $C=C(s,\om)$.
  \end{cor}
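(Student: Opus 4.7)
The plan is to apply Theorem~\ref{theo-lin-LLL} and transfer all the required properties from the reduced (constant coefficient) equation through the conjugation. For $\om \in \Lambda_\eps$, the reducibility result yields a time quasi-periodic transformation $\Psi(t\om)$, with $\Psi \in \mathcal{C}(\T^n; \mathcal{L}(\E))$, close to the identity in the sense that
$$\|\Psi(\theta) - I\|_{\mathcal{L}(\HH^s)} + \|\Psi(\theta)^{-1} - I\|_{\mathcal{L}(\HH^s)} \leq \eps\, C(s,\om)$$
uniformly in $\theta \in \T^n$, and such that the change of unknown $u(t) = \Psi(t\om)\, v(t)$ transforms \eqref{linH-KAM} into a time-independent equation $i\partial_t v = Q_\infty v$. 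Here $Q_\infty$ is a self-adjoint bounded perturbation of $H$ on $\E$ which is diagonal in some Hilbertian basis $(\psi_k)_{k\geq 0}$ of $\E$ with real eigenvalues $(\mu_k)_{k \geq 0}$, and which preserves each Sobolev space $\HH^s$. The hypothesis that $W$ is smooth in $(x,y)$ with all its derivatives bounded is precisely what is needed in order for the KAM iteration underlying Theorem~\ref{theo-lin-LLL} to converge simultaneously in every Sobolev scale $\HH^s$, $s\geq 0$.

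Next, the reduced equation is solved explicitly. Setting $v(0) := \Psi(0)^{-1} u_0 \in \HH^s$ and decomposing $v(0) = \sum_{k\geq 0} c_k \psi_k$, we have
$$v(t) = \sum_{k\geq 0} c_k\, e^{-it\mu_k}\, \psi_k,$$
which, since $v(0) \in \HH^s$ and the partial sums approximate $v(t)$ uniformly in $t$ in the $\HH^s$ norm by trigonometric polynomials, is an almost-periodic function of $t$ valued in $\HH^s$. Moreover, as $Q_\infty$ is self-adjoint and diagonal in the basis $(\psi_k)$ which is equivalent to the $(\phi_n)$ basis up to an $O(\eps)$ perturbation, the reduced flow preserves $\|v(t)\|_{\HH^s}$ exactly up to a factor $1+O(\eps)$.

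The candidate solution of \eqref{linH-KAM} is then $u(t) := \Psi(t\om)\, v(t)$, which by construction lies in $\mathcal{C}(\R;\HH^s)$ and satisfies $u(0) = u_0$. Uniqueness at the $\HH^s$ level follows from the $L^2 = \HH^0$ uniqueness provided by Theorem~\ref{thm2.1}. Almost-periodicity of $u$ is obtained by combining the almost-periodicity of $v$ with the quasi-periodicity of $t\mapsto \Psi(t\om)$: the product is approximated uniformly in $t$, in the $\HH^s$ norm, by finite sums of the form $\sum_j A_j\, e^{it(\mu_{k_j} + \langle m_j,\om\rangle)}$ with $A_j \in \HH^s$ and $m_j \in \Z^n$. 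Finally, the two-sided bound
$$(1 - \eps C)\|u_0\|_{\HH^s} \leq \|u(t)\|_{\HH^s} \leq (1+\eps C)\|u_0\|_{\HH^s}$$
is immediate after chaining the close-to-identity estimates on $\Psi(t\om)^{\pm 1}$ with the norm preservation by the reduced flow, absorbing all multiplicative $1+O(\eps)$ factors into a single constant $C = C(s,\om)$.

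The only genuine obstacle is to verify that the transformation produced by \cite[Theorem 7.1]{Grebert-Thomann} indeed acts continuously on every $\HH^s$ with the claimed close-to-identity estimate; under the present regularity hypothesis on $W$ (smoothness in $(x,y)$ with all derivatives bounded), this is a standard bookkeeping check inside the KAM scheme of~\cite{Grebert-Thomann}, analogous to the statements made there for the one-dimensional quantum harmonic oscillator with a time quasi-periodic potential.
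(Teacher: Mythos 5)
Your argument is correct and follows essentially the same route as the paper: the paper proves this corollary exactly by invoking the reducibility statement (in the more precise form of \cite[Theorem 7.1 and Section 7]{Grebert-Thomann}, which supplies the near-identity quasi-periodic conjugation bounded on every $\HH^{s}$) and transferring the explicit, diagonal, norm-preserving reduced dynamics back through the transformation, just as you do. The only point you defer --- that the KAM transformation is close to the identity in $\mathcal{L}(\HH^{s})$ for all $s$ under the smoothness assumption on $W$ --- is likewise deferred by the paper to \cite[Section~7]{Grebert-Thomann}, so your proposal matches the paper's proof in both structure and level of detail.
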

The result of Theorem \ref{theo-lin-LLL} can also be formulated in term of  the Floquet operator.  Consider  the Floquet Hamiltonian operator, defined on $\E\otimes L^2(\T^n)$ by 
 \begin{equation*} 
 K:=i\sum_{k=1}^n\omega_k  \partial_{ \theta_k} +H +\epsilon \Pi \big(W(\theta,z) \cdot  \big),
\end{equation*}
then we can state  
 \begin{cor}
 Assume that $W$ satisfies  \eqref{cond-W}. There exists $\eps_{0}>0$ so that for all $0<\eps<\eps_{0}$ and $\om \in \Lambda_{\eps}$, the spectrum of the Floquet operator $K$ is pure point.
 \end{cor}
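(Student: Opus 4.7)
The plan is to deduce this corollary directly from Theorem~\ref{theo-lin-LLL}, using the standard dictionary between reducibility of a time quasi-periodic linear equation and the spectral analysis of its Floquet operator (see, e.g., the discussion in~\cite{Grebert-Thomann}).

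First, I would restate the reducibility conclusion of Theorem~\ref{theo-lin-LLL} in operator form. For $\omega\in\Lambda_\eps$, the reduction produces a unitary, quasi-periodic-in-time family $\theta\mapsto\mathcal{M}(\theta)$ of bounded operators on $\E$, analytic in $|\Im\theta|<\tau/2$, together with a time-independent self-adjoint operator $Q_\omega$ on $\E$ with compact resolvent, such that along solutions of \eqref{linH-KAM},
\begin{equation*}
v(t):=\mathcal{M}(\omega t)^{-1}u(t)\qquad\text{satisfies}\qquad i\partial_t v=Q_\omega v.
\end{equation*}
Since $Q_\omega$ is a small self-adjoint perturbation of $H$, it has pure point spectrum $\{\mu_j\}_{j\ge 0}$ with an orthonormal basis of eigenfunctions $(\psi_j)_{j\ge 0}$ of $\E$.

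Next, I would lift the map $\mathcal{M}$ to the extended phase space $\E\otimes L^2(\T^n)$. Define $\mathcal{U}\in\mathcal{L}(\E\otimes L^2(\T^n))$ by $(\mathcal{U}\Phi)(z,\theta)=\mathcal{M}(\theta)\Phi(z,\theta)$. Then $\mathcal{U}$ is unitary (because each $\mathcal{M}(\theta)$ is) and a direct computation, using the very identity that expresses reducibility in time-frequency variables, gives
\begin{equation*}
\mathcal{U}^{-1}K\mathcal{U}=i\sum_{k=1}^n\omega_k\partial_{\theta_k}+Q_\omega,
\end{equation*}
where on the right-hand side $Q_\omega$ acts on the $z$-variable and the derivation acts on~$\theta$. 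Thus it is enough to show that this conjugated operator has pure point spectrum.

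Finally, I would diagonalize the conjugated operator by separation of variables: the family $\{\psi_j(z)e^{i\ell\cdot\theta}\}_{j\ge 0,\,\ell\in\Z^n}$ is a complete orthonormal basis of $\E\otimes L^2(\T^n)$, and
\begin{equation*}
\Bigl(i\sum_{k=1}^n\omega_k\partial_{\theta_k}+Q_\omega\Bigr)\bigl(\psi_j\,e^{i\ell\cdot\theta}\bigr)=(-\omega\cdot\ell+\mu_j)\,\psi_j\,e^{i\ell\cdot\theta}.
\end{equation*}
Hence the conjugated operator, and therefore $K$ itself, has pure point spectrum.

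The only genuine content to verify carefully is that Theorem~\ref{theo-lin-LLL} indeed produces a reducer $\mathcal{M}(\theta)$ with the regularity and unitarity needed to define $\mathcal{U}$ as an honest unitary on $\E\otimes L^2(\T^n)$ and to intertwine $K$ with $i\omega\cdot\partial_\theta+Q_\omega$ as unbounded operators on the Floquet space; this is precisely the statement recalled from~\cite[Theorem~7.1]{Grebert-Thomann}, so the main obstacle is essentially bookkeeping rather than analysis.
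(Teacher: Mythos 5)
Your proposal is correct and follows exactly the standard route the paper relies on: the paper gives no independent argument but defers to \cite[Section~7]{Grebert-Thomann}, where the corollary is obtained precisely by lifting the reducing quasi-periodic transformation of Theorem~\ref{theo-lin-LLL} to a conjugation of $K$ on $\E\otimes L^2(\T^n)$ and diagonalizing the resulting constant-coefficient operator on the basis $\psi_j e^{i\ell\cdot\theta}$. Your flagged caveat (regularity and invertibility of the reducer, sign conventions in the intertwining) is indeed only bookkeeping, and even if the reducer were merely bounded invertible rather than unitary, density of the span of eigenvectors plus self-adjointness of $K$ would still give pure point spectrum.
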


We refer to \cite[Section~7]{Grebert-Thomann}, where similar results are proven for the 1D quantum harmonic oscillator. \medskip

For the reducibility of the periodic Schr\"odinger equation, we refer to \cite{ElKuk2} and for the reducibility of the quantum harmonic oscillator in any dimension to~\cite{GP,LW,BGMR2} and we refer to \cite{Bambusi1, Bambusi2, Bambusi3} for the reducibility for 1-$d$ operators with unbounded perturbations. For references on the theory of Floquet operators, see  \cite{Eli, Wang}.\medskip

Finally, let us mention that, concerning the nonlinear  cubic LLL equation,  the abstract KAM result of \cite{Grebert-Thomann} was applied in \cite[Theorem 4.3]{GGT} in order to show the existence of invariant torii, and that the result of \cite{GIP} was applied to show an almost global existence result for the cubic LLL equation. We refer to \cite[Section~4.2]{GGT} for more details. \medskip

 The arguments of~\cite[Section~7]{Grebert-Thomann} can be directly  applied to the equation \eqref{linH-KAM}, and we address the reader to this latter paper for the proofs of the previous results.  Let us just sketch the idea : we expand~$u$ and~$\bar u$ on the basis given by the special Hermite functions
 $$\dis u=\sum_{j\geq 0} c_j {\phi_j}, \quad  \dis \ov{u}=\sum_{j\geq 0} \ov{c}_j \ov{\phi}_j.$$
 Then equation  \eqref{linH-KAM} reads as  an autonomous Hamiltonian system in an extended phase space
\begin{equation}\label{liH}
\left\{ \begin{array}{ll}     
\dot c_j=-2i(j+1) c_j- i\eps \partial_{\bar c_j}Q(\theta,c,\bar c)\quad & j\geq 0 \\[5pt]
\dot{\bar c}_j= 2i(j+1)\bar c_j+ i\eps \partial_{  c_j}Q(\theta,c,\bar c) & j\geq 0\\[5pt]
\dot \theta_j= \omega_j & j=0,\dots, n\\[5pt]
\dot Y_j= -\eps \partial_{ \theta_j}Q(\theta,z,\bar z) & j=0,\dots, n
\end{array}\right.
\end{equation} 
where $Q$  is a quadratic functional in $(c,\bar c)$ given by
\begin{equation}\label{def-Q}
Q(\theta,c,\bar c)=\int_\C W(\theta,z)\Big(\sum_{j\geq 0} c_k \phi_k(z)\Big) \ov{\Big(\sum_{j\geq 0}  c_j {\phi_j(z)}\Big)}dL(z),
\end{equation}
and the Hamiltonian of the system~\eqref{liH} is 
$$
\sum_{j=1}^n \omega_j Y_j +2\sum_{j\geq 0} (j+1) c_j\ov{c}_j+Q(\theta,c,\bar c).
$$
Then, we can check that \eqref{def-Q} satisfies the assumptions of \cite[Theorem 7.1]{Grebert-Thomann}. The dispersive estimate $\|\phi_n \|_{L^\infty(\C)} \leq C n^{-1/4}$  satisfied by the   $(\phi_n)_{n\geq 0}$, is the key ingredient which    allows  to follow the lines of~\cite[Section~7]{Grebert-Thomann}.

  \section{Some technical results}

 \begin{lem}\label{lem-comm}
 The operators $H$ and $\Pi$ commute.
\end{lem}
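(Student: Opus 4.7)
The strategy is to realize $\mathcal{E}$ as a spectral (invariant) subspace for the self-adjoint operator $H$. The text has already recorded two crucial facts: $(\phi_n)_{n\geq 0}$ is a Hilbertian basis of $\mathcal{E}$, and $H\phi_n = 2(n+1)\phi_n$. Together they say that $\mathcal{E}$ is the closure in $L^2(\mathbb{C})$ of the linear span of eigenvectors of $H$; this is the core fact that will drive the argument.

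To turn this into the commutation statement, I would first complete $(\phi_n)_{n\geq 0}$ to a Hilbertian basis of $L^2(\mathbb{C})=L^2(\mathbb{R}^2)$ consisting of eigenvectors of $H$. This is possible because $H$ is the standard 2D isotropic harmonic oscillator, whose spectrum is $\{2(n+1)\}_{n\geq 0}$ with eigenspace $V_n := \ker(H - 2(n+1))$ of dimension $n+1$ (spanned, for instance, by tensor products of one-dimensional Hermite functions). Since $\phi_n \in V_n$, one can choose an orthonormal basis $(\psi_{n,k})_{0\leq k\leq n}$ of $V_n$ with $\psi_{n,0}=\phi_n$; the resulting family $(\psi_{n,k})_{n\geq 0,\, 0\leq k\leq n}$ is then a Hilbertian basis of $L^2(\mathbb{C})$ diagonalising $H$, and $\mathcal{E}$ is exactly the closed span of $\{\psi_{n,0}\}_{n\geq 0}$. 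Consequently $\Pi$ is the diagonal projection $\sum c_{n,k}\psi_{n,k}\mapsto \sum_n c_{n,0}\phi_n$.

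With both operators diagonal in the same basis, commutation is automatic. For any $u=\sum c_{n,k}\psi_{n,k}\in \mathrm{Dom}(H)$ (characterized by $\sum (n+1)^2|c_{n,k}|^2<\infty$), the coefficients $(c_{n,0})_n$ of $\Pi u$ satisfy the same summability condition, so $\Pi u\in \mathrm{Dom}(H)$, and both $H\Pi u$ and $\Pi Hu$ equal $\sum_n 2(n+1)c_{n,0}\phi_n$. This establishes $[H,\Pi]=0$ on $\mathrm{Dom}(H)$, which is the meaning of commutation for an unbounded operator with a bounded one.

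There is no genuine obstacle; the only bookkeeping is the standard observation that $L^2(\mathbb{C})$ admits a basis of $H$-eigenvectors containing the special Hermite functions, which follows from the explicit Hermite description of the 2D harmonic oscillator. An alternative, essentially equivalent, route would be to note that $\mathcal{E}$-invariance under $H$ (immediate from $H\phi_n = 2(n+1)\phi_n$ plus density of $\mathrm{span}(\phi_n)$ in $\mathcal{E}$) together with self-adjointness of $H$ forces $\mathcal{E}^\perp$ to be $H$-invariant as well, which yields the commutation $H\Pi = \Pi H$ without ever introducing the higher Landau levels explicitly.
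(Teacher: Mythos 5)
Your proof is correct, but it follows a genuinely different route from the paper. The paper proves the lemma by a direct computation: it plugs the explicit kernel $(\Pi u)(z)=\frac1\pi e^{-|z|^2/2}\int_{\C}e^{\ov w z-|w|^2/2}u(w)\,dL(w)$ into $\Pi(\partial_z\partial_{\ov z}u)$, integrates by parts to move $\partial_w\partial_{\ov w}$ onto the kernel, and compares with a direct differentiation of $\Pi u$, obtaining $\Pi H u = 2\Pi u + 2z\,\Pi(\ov w u) = H\Pi u$. You instead exploit the spectral structure: the eigenvalue $2(N+1)$ of the 2D oscillator has an $(N+1)$-dimensional eigenspace containing $\phi_N$, so one can complete $(\phi_n)$ to an $H$-eigenbasis of $L^2(\C)$ in which $\Pi$ is the diagonal projection onto the lowest-Landau-level vectors, and two operators diagonal in the same orthonormal basis commute (with the correct domain statement $\Pi\,\mathrm{Dom}(H)\subset\mathrm{Dom}(H)$ and $H\Pi u=\Pi H u$ there, which you verify). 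Your argument is more structural and buys more than the paper's: it immediately gives commutation of $\Pi$ with every spectral function of $H$, in particular with $(H+1)^{\rho}$, which is exactly how the lemma is used in Section~4.3; the paper instead gets this by combining the kernel identity with functional calculus. The paper's computation, on the other hand, is self-contained, needs no knowledge of the higher Landau levels or of the multiplicity structure of the spectrum, and yields the pointwise identity on explicit functions. One small caveat on your alternative closing remark: for an unbounded $H$, deducing invariance of $\E^{\perp}$ from invariance of $\E$ and self-adjointness requires checking that $\Pi$ maps $\mathrm{Dom}(H)$ into itself (a subspace must \emph{reduce} $H$, not merely be invariant); your main basis argument supplies exactly this, so the proof as written is complete.
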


\begin{proof}
Recall that 
$$
[\Pi u](z) = \frac{1}{\pi} e^{-\frac{|z|^2}{2}} \int_\mathbb{C} e^{\ov  w z - \frac{|w|^2}{2}} u(w) \,dL(w),
$$
and that 
$$H=-4\partial_z \partial_{\ov z}+|z|^2.$$
On the one hand, by integration by parts
\begin{eqnarray*}
\big(\Pi \partial_z\partial_{\ov z} u\big)(z)&=& \frac{1}{\pi} e^{-\frac{|z|^2}{2}} \int_\mathbb{C} e^{\ov  w z - \frac{|w|^2}{2}} \partial_w\partial_{\ov w} u(w) \,dL(w)\\
&=& \frac{1}{\pi} e^{-\frac{|z|^2}{2}} \int_\mathbb{C}  \partial_w\partial_{\ov w} \big( e^{\ov  w z - \frac{|w|^2}{2}}\big)  u(w) \,dL(w)\\
&=& -\frac12 \Pi u(z)-\frac12 z \Pi(\ov{w}u)(z)+\frac14 \Pi(|w|^2u)(z),
\end{eqnarray*}
thus
$$ \Pi Hu(z)=2\Pi u(z)+2z  \Pi(\ov{w}u)(z).$$
On the other hand 
\begin{eqnarray*}
\partial_z  \Pi u(z) &=&-  \frac{1}{\pi}\frac{\ov{z}}2 e^{-\frac{|z|^2}{2}} \int_\mathbb{C} e^{\ov  w z - \frac{|w|^2}{2}} u(w) \,dL(w)+ \frac{1}{\pi}  e^{-\frac{|z|^2}{2}} \int_\mathbb{C} e^{\ov  w z - \frac{|w|^2}{2}} \ov{w}u(w) \,dL(w)\\
&=&-\frac{\ov{z}}{2}\Pi u(z)+\Pi\big(\ov{z}u\big)(z),
\end{eqnarray*}
then 
$$  \partial_{\ov z}\partial_z \Pi u(z)=-\frac{1}2 \Pi u(z)+ \frac14 |z|^2 \Pi u(z)-\frac{z}2\Pi(\ov{w}u)(z) ,$$
and we get that 
$$H \Pi u(z)=2\Pi u(z)+2z  \Pi(\ov{w}u)(z) =\Pi Hu(z),$$
which was the claim.
\end{proof}

We recall a short version of \cite[Lemma~A.2]{Schw-Tho} :
\begin{lem}\label{lem.deri}
For all $ j, k \geq 0$ there exists $C>0$ such that for all  $1\leq p \leq \infty$ and $v\in \E$, 
$$ \big\| \partial^j_{\ov z}\partial^k_z\big(|v|^2\big)\big\|_{L^p(\C)} \leq C   \| v\|^2_{L^{2p}(\C)}.$$
\end{lem}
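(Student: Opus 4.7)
The plan is to exploit the reproducing formula for the Bargmann--Fock space in order to reduce everything to a Gaussian convolution estimate. Since $v\in \mathcal{E}$ we have $v=\Pi v$, and the explicit formula \eqref{ker-pi} for $\Pi$ reads
$$
v(z)=\frac{1}{\pi}\int_{\C} v(w)\,K(z,w)\,dL(w),\qquad K(z,w):=e^{\ov{w}z-|w|^2/2-|z|^2/2}.
$$
Multiplying this identity by its complex conjugate (with integration variable $w'$) produces the double--integral representation
$$
|v(z)|^2=\frac{1}{\pi^2}\int_{\C}\int_{\C}v(w)\,\ov{v(w')}\,K(z,w)\,\ov{K(z,w')}\,dL(w)\,dL(w').
$$

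I would then differentiate under the integral: the derivatives $\partial_z^k\partial_{\ov z}^j$ act only on the kernel, whose exponent is quadratic in $(z,\ov z)$. Since $\partial_z(\ov w z-z\ov z)=\ov w-\ov z$ and $\partial_{\ov z}(w'\ov z-z\ov z)=w'-z$, iteration gives
$$
\partial_z^k\partial_{\ov z}^j\bigl(K(z,w)\ov{K(z,w')}\bigr)=(\ov w-\ov z)^k(w'-z)^j\,K(z,w)\,\ov{K(z,w')}.
$$

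The crucial step is the identity
$$
\bigl|K(z,w)\,\ov{K(z,w')}\bigr|=e^{-|w-z|^2/2-|w'-z|^2/2},
$$
obtained by completing the square in the exponent $\Re(\ov w z+w'\ov z)-\tfrac12|w|^2-\tfrac12|w'|^2-|z|^2$: the cross terms $\Re(\ov w z)$ and $\Re(w'\ov z)$ exactly cancel the $-|z|^2$, leaving only Gaussian decay in the relative variables $w-z$ and $w'-z$. Combining these two facts yields the pointwise convolution bound
$$
\bigl|\partial_z^k\partial_{\ov z}^j(|v|^2)(z)\bigr|\leq\frac{1}{\pi^2}\bigl(|v|\ast K_k\bigr)(z)\,\bigl(|v|\ast K_j\bigr)(z),\qquad K_m(u):=|u|^m e^{-|u|^2/2}\in L^1(\C).
$$

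To conclude, I would apply H\"older's inequality with the splitting $\tfrac{1}{p}=\tfrac{1}{2p}+\tfrac{1}{2p}$, followed by Young's convolution inequality $L^{2p}\ast L^1\hookrightarrow L^{2p}$:
$$
\bigl\|\partial_z^k\partial_{\ov z}^j(|v|^2)\bigr\|_{L^p(\C)}\leq\frac{1}{\pi^2}\bigl\||v|\ast K_k\bigr\|_{L^{2p}}\bigl\||v|\ast K_j\bigr\|_{L^{2p}}\leq C\,\|v\|_{L^{2p}(\C)}^{2},
$$
with $C=\pi^{-2}\|K_k\|_{L^1}\|K_j\|_{L^1}$, which is exactly the claimed estimate.

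The only subtle point is the square--completion producing the clean pointwise bound on $|K\ov K|$: without this cancellation the polynomial factors $(\ov w-\ov z)^k(w'-z)^j$ would seem to grow in $|z|$, and indeed a direct Leibniz expansion of $|v|^2=|f|^2e^{-|z|^2}$ followed by Cauchy--Schwarz does not yield a $v$-independent constant (one can check on $v=z^n e^{-|z|^2/2}$ that the naive bound on $f'e^{-|z|^2/2}$ blows up like $\sqrt n$). The reproducing--kernel representation is precisely what makes the uniform constant appear.
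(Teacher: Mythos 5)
The paper itself does not prove this lemma: it is quoted as a short version of \cite[Lemma~A.2]{Schw-Tho}. So your reproducing--kernel argument is in any case an independent, self-contained route, and its skeleton is sound: representing $|v|^2$ through the kernel \eqref{ker-pi}, using the square--completion identity $|K(z,w)|=e^{-|z-w|^2/2}$, and closing with H\"older ($\tfrac1p=\tfrac1{2p}+\tfrac1{2p}$) followed by Young's inequality $L^{2p}\ast L^1\subset L^{2p}$ indeed produces a constant depending only on $j,k$, uniformly in $1\le p\le\infty$ and in $v$, which is exactly what the lemma requires.

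The one incorrect step is the displayed identity $\partial_z^k\partial_{\ov z}^j\big(K(z,w)\ov{K(z,w')}\big)=(\ov w-\ov z)^k(w'-z)^j\,K(z,w)\ov{K(z,w')}$: it fails as soon as $j\ge 1$ and $k\ge 1$, because the factor $(w'-z)^j$ created by $\partial_{\ov z}^j$ is \emph{not} annihilated by $\partial_z$ (for $j=k=1$ one actually gets $\big((\ov w-\ov z)(w'-z)-1\big)K\ov{K}$). The correct statement, obtained by iterating Leibniz, is a finite sum $\sum_{m=0}^{\min(j,k)}c_{j,k,m}\,(\ov w-\ov z)^{k-m}(w'-z)^{j-m}\,K\ov{K}$ with combinatorial constants $c_{j,k,m}$. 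This slip does not derail the proof: every term in the sum obeys the same Gaussian pointwise bound, with kernels $K_{k-m}(z-w)\,K_{j-m}(z-w')$, so the convolution and H\"older--Young steps go through term by term and the final constant is just a finite sum of products $\|K_{k-m}\|_{L^1}\|K_{j-m}\|_{L^1}$, still independent of $p$ and $v$. You should also add one line justifying differentiation under the integral sign (dominated convergence, using $v\in L^2(\C)$, Cauchy--Schwarz in $w$, and the Gaussian decay of the kernels, locally uniformly in $z$); with these two repairs the argument is complete.
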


\end{document}